\newtheorem{theorem}{Theorem}[section]
\newtheorem{lemma}[theorem]{Lemma}
\theoremstyle{definition}
\newtheorem{remark}{Remark}
\journal{ }
\begin{document}

\begin{frontmatter}

%% Title, authors and addresses

%% use the tnoteref command within \title for footnotes;
%% use the tnotetext command for theassociated footnote;
%% use the fnref command within \author or \address for footnotes;
%% use the fntext command for theassociated footnote;
%% use the corref command within \author for corresponding author footnotes;
%% use the cortext command for theassociated footnote;
%% use the ead command for the email address,
%% and the form \ead[url] for the home page:
%% \title{Title\tnoteref{label1}}
%% \tnotetext[label1]{}
%% \author{Name\corref{cor1}\fnref{label2}}
%% \ead{email address}
%% \ead[url]{home page}
%% \fntext[label2]{}
%% \cortext[cor1]{}
%% \address{Address\fnref{label3}}
%% \fntext[label3]{}

\title{Splitting properties of linear differential systems with small delays}
%\tnotetext[label1]{This work was supported in part by the National
%Natural Science Foundation of China (11601355, 11671279, 11671280).}
%% use optional labels to link authors explicitly to addresses:
%% \author[label1,label2]{}
%% \address[label1]{}
%% \address[label2]{}

\author{Shuang Chen}
\ead{schen@hust.edu.cn}
\address{Center for Mathematical Sciences, Huazhong University of Sciences and Technology\\
Wuhan, Hubei 430074, P. R. China}

\begin{abstract}
%% Text of abstract

We study the splitting properties of general linear differential equations with small delay.
After giving the explicit expressions of the bounds and the exponents
associated with a certain dichotomy induced by small delay,
we prove that as the delay tends to zero,
the spectral gap approaches to infinity,
and  the angular distance and the separation index associated with this dichotomy are bounded from below by a positive constant
which is independent of the delay.

\end{abstract}

\begin{keyword}
%% keywords here, in the form: keyword \sep keyword.
% PACS codes here, in the form: \PACS code \sep code
Small delay\sep spectral gap \sep angular distance \sep separation index.
%% MSC codes here, in the form: \MSC code \sep code
%% or \MSC[2008] code \sep code (2000 is the default)
\MSC[2020] 34K06\sep 34D09.
\end{keyword}

\end{frontmatter}

%% \linenumbers

%% main text

\section{Introduction}
\label{sec-intr}

The effects of small delays on the dynamics of differential equations have been widely studied in the past decades.
For example,
spectral analysis and special solutions for linear differential equations with small delays were investigated in
\cite{Arino-Pituk,Chen-Shen-20,Driver1976,Faria-Huang,Gyori-Pitukl}.
Asymptotic behaviors and inertial manifolds for nonlinear equations were considered in
\cite{Chicone03,Driver1968,Ryabov1965}.
Existence of periodic solutions for scalar differential equations with small delays
were studied in \cite{Chow-Huang94,Chow-Mallet83}.
Recently,
traveling waves arising from partial differential equations with small  nonlocal delays \cite{DuLiLi-18,Li-Guo-18},
and canard explosion and relaxation oscillations in differential systems with small delays \cite{Campbel-Stone-Erneux,Krupa-Touboul}
also attracted many attentions.
For many more dynamical behaviors induced by small delays,
we refer to  \cite{Kuehn-15,Walther-14}.
Here we consider a general linear non-autonomous retarded differential equation with small delay in the view of exponential dichotomy
\cite{Barreira-D-Valls,Coppel},
and study the effects of  small delay on the spectral gap, the angular distance and the separation index
\cite{Daleckii-Krein-74,Lian-Wang-15,Massera-66}.

Exponential dichotomy is an important concept in describing the hyperbolic property of linear differential equations.
It is widely used in studying invariant manifolds, homoclinic bifurcation,  linearization and so on \cite{Barreira-D-Valls,Coppel,JKHale-Verduyn,Henry1981,Zhang-etal-17}.
As a generalization of exponential dichotomy,
the concept of pseudo-exponential dichotomy was laid to establish the weak stable/unstable invariant manifolds
(see, for instance, \cite{Hirsch-Pugh-Shub,Zhang}).
Let $\mathcal{B}$ denote a Banach space.
Assume that $\{T(t,s):t\geq s\}$, a two-parameter family of bounded linear operators on $\mathcal{B}$,
is a semigroup and strongly continuous in $s$ and $t$.
Then the two-parameter family $\{T(t,s):t\geq s\}$  is said to admit a {\it pseudo-exponential dichotomy} on an interval $J(\subset \mathbb{R})$
if for each $s\in J$,
there exist a projection $P(s)$ on $\mathcal{B}$ and real constants $K$, $\alpha$ and $\beta$ with $K>0$ and $\beta<\alpha$
such that the following properties hold:
\begin{itemize}
\item[(i)]
$T(t,s)P(s) = P(t)T(t,s)$ for $t\geq s$ in $J$.
\item[(ii)]
$T(t,s)|_{\mathcal{R}(P(s))}$ is an isomorphism from $\mathcal{R}(P(s))$ onto $\mathcal{R}(P(t))$,
where $\mathcal{R}(P(s))$ is the range of $P(s)$.
The inverse of $T(t,s)|_{\mathcal{R}(P(s))}$ is denoted by $T(s,t):\mathcal{R}(P(t)) \to \mathcal{R}(P(s))$.
\item[(iii)]
$|T(s,t)P(t)\phi| \leq K e^{-\alpha (t-s)}|P(t)\phi|$ for $t\geq s$ in $J$ and $\phi\in\mathcal{B}$.
\item[(iv)]
$|T(t,s)Q(s)\phi| \leq K e^{\beta (t-s)}|Q(s)\phi|$ for $t\geq s$ in $J$ and $\phi\in\mathcal{B}$,
where $Q(s)=I-P(s)$ and $I$ is the identity.
\end{itemize}
We refer  to the constants $K$, $\alpha$ and $\beta$ as a {\it bound}, an {\it upper exponent} and a {\it lower exponent}
of this pseudo-exponential dichotomy, respectively.
The {\it spectral gap} associated with this dichotomy is defined by:
\begin{eqnarray*}
\mbox{ the spectral gap }\!\!\!&=&\sup\left\{\alpha\in\mathbb{R}:\sup_{t\geq s}|T(s,t)P(t)| e^{\alpha (t-s)}<+\infty\right\}\\
& &-\inf\left\{\beta\in\mathbb{R}:\sup_{t\geq s}|T(t,s)Q(s)|e^{-\beta (t-s)}<+\infty\right\}.
\end{eqnarray*}
Whenever there is no confusion,
we always use $|\cdot|$ to denote the norms of the elements in a linear space endowed with a suitable norm.
The spectral gap plays an important role in the invariant manifold reduction and
its value guarantees the smoothness of invariant manifolds.
We refer to  \cite{Bates-Jones,Chow-Lu,Foiaseatal,Henry1981,Hirsch-Pugh-Shub,Shen-Lu-Zhang-20} for more details on
the relation between the spectral gap and the invariant manifolds theory.

Assume that a strongly continuous semigroup  $\{T(t,s):t\geq s\}$ admits a pseudo-exponential dichotomy.
To describe the splitting properties of the dichotomy,
we give the definitions of the {\it angular distance}  and the {\it separation index} between the nonzero spaces $X_{+}=\mathcal{R}(P(s))$
and $X_{-}=\mathcal{R}(I-P(s))$ for each $s\in J$.
More information on the angular distance and the separation index can be found in \cite{Daleckii-Krein-74,Lian-Wang-15,Massera-66}.
Before giving the definition for the angular distance between two nonzero subspaces,
we first define the angular distance between two nonzero elements in the Banach space $X$ (see \cite{Daleckii-Krein-74,Massera-66}).
For each pair of nonzero elements $\xi_{\pm}$  in $X$,
the angular distance $\gamma(\xi_{+},\xi_{-})$ between $\xi_{+}$ and $\xi_{-}$ is defined by
$$\gamma(\xi_{+},\xi_{-}):=|\frac{\xi_{+}}{|\xi_{+}|}-\frac{\xi_{-}}{|\xi_{-}|}|.$$
Then the angular distance $\gamma(X_{+},X_{-})$ between nonzero subspaces $X_+$ and $X_-$ is given by
\begin{eqnarray}
\gamma(X_+,X_-):=\inf\left\{\gamma(\xi_+,\xi_-): \xi_+\in X_+/\{0\},\ \ \xi_-\in X_-/\{0\} \right\}.
\label{df-distance}
\end{eqnarray}
The angular distance $\gamma(X_{+},X_{-})$ between subspaces $X_+$ and $X_-$ is closely related to
the separation index $\underline{\rm dist}(X_{+},X_{-})$ (see \cite{Lian-Wang-15}),
which is given by
\begin{eqnarray}
\underline{\rm dist}(X_{+},X_{-}):=\inf_{\xi_{+}\in X_{+}\cap S}\left\{\inf_{\xi_{-}\in X_{-}}|\xi_{+}-\xi_{-}|\right\},
\label{df-index}
\end{eqnarray}
where the set $S=\{\xi\in X: |\xi|=1\}$ denotes the unit ball in the Banach space $X$.
Both of them describe the geometric properties of the dichotomies for linear differential systems,
and play important roles in the study of dynamical behaviors.

In this paper,
we investigate the pseudo-exponential dichotomy for a linear retarded differential equation with small delay.
More precisely,
we consider a general linear non-autonomous retarded differential equation of the form
\begin{eqnarray}
\dot x(t) = L(t,x_t),
\label{NARFDE}
\end{eqnarray}
where the linear operator $L(t,\cdot):  C[-r,0]:=C([-r,0],\mathbb{R}^n) \to \mathbb{R}^n$ is continuous for each $t\in \mathbb{R}$,
the constant $r$ is the delay
and the section $x_{t}$ is defined by $x_t(\theta):= x(t+\theta)$ for $\theta \in [-r,0]$.
Let the space $C[-r,0]$ of all continuous functions from $[-r,0]$ into $\mathbb{R}^n$ be equipped with the supremum norm.
Then by the {\it Riesz Representation Theorem},
the operators $L(t,\cdot)$ are represented by
\begin{eqnarray}\label{Lt}
L(t,\phi)=\int^{0}_{-r} d[\eta(t,\theta)]\phi(\theta) ,\ \ \ \
         t\in \mathbb{R}, \ \ \phi \in  C[-r,0] ,
\end{eqnarray}
where the kernel $\eta$ is an $n\times n$ matrix-valued function on $\mathbb{R}\times \mathbb{R}$,
measurable in $(t,\theta)\in \mathbb{R}\times \mathbb{R}$,
and normalized so that $\eta$ satisfies $\eta(t,\theta)=\eta(t,-r)$ for $\theta\leq -r$
and $\eta(t,\theta)=0$ for $\theta\geq 0$,
%$$
%\eta(t,\theta)=\eta(t,-r)\  \mbox{ for } \ \theta\leq -r,\ \ \
%\eta(t,\theta)=0 \  \mbox{ for } \ \theta\geq 0,
%$$
and $\eta(t,\cdot)$ is continuous from the left on $(-r,0)$ and has bounded variation for each $t\in \mathbb{R}$.
We follow \cite{Arino-Pituk,Chicone03,Driver1976,Ryabov1965} and
assume that the delay $r$ and the kernel $\eta$ satisfy the following hypothesis:
\begin{enumerate}
\item[{\bf (H)}]
there is a positive constant $M$ such that
the total variation ${\rm Var}_{[-r,0]}\eta(t,\cdot)$ of $\eta(t,\cdot)$ on $[-r,0]$ and the delay $r$ respectively satisfy
\begin{eqnarray*}
{\rm Var}_{[-r,0]}\eta(t,\cdot) \leq M \mbox{ for each } t\in\mathbb{R}\ \mbox{ and }\
0<r<r_{0}:=1/(Me).
\end{eqnarray*}
\end{enumerate}
By \cite[Theorem 2.3, p.44]{JKHale-Verduyn},
linear equation (\ref{NARFDE}) with the initial value $x_s=\phi$ has a unique solution $x(\cdot,s,\phi)$.
Define the solution operator $T(t,s): C[-r,0]\to  C[-r,0] $ of equation (\ref{NARFDE}) by
\begin{eqnarray}
T(t,s)\phi:= x_t(\cdot,s,\phi) \ \mbox{ for each } s,t\in \mathbb{R} \mbox{ with } t\geq s \mbox{ and } \phi\in  C[-r,0] .
\label{df-semigroup}
\end{eqnarray}
Then the two-parameter family $\{T(t,s):t\geq s\}$ of the solution operators acting on the space $ C[-r,0] $
is an evolutionary system and strongly continuous in $t$ and $s$ (see \cite{JKHale-Verduyn}).

Our goal is to study the splitting properties of linear differential systems with small delays.
In the current paper,
we prove that the spectral gap corresponding to  a pseudo-exponential dichotomy for equation (\ref{NARFDE}) with {\bf (H)}
tends to infinity as the delay approaches to zero (see Theorem \ref{thm-main-results}),
and obtain that both the corresponding angular distance and  separation index
are uniformly bounded from below for sufficiently small delay (see Theorem \ref{thm-angular-index}).
These results improve the existing results in the literatures \cite{Arino-Pituk,Driver1968,Driver1976}.
By the invariant manifolds theory (see, for instance, \cite{Bates-Jones,Chow-Lu,JKHale-Verduyn,Henry1981}),
we find that large spectral gap is helpful to realize the finite-dimensional reduction of  retarded differential equations.
As a consequence,
we could apply the methods for ordinary differential equations to study local dynamics of small-delay systems,
such as Hopf bifurcation and canard explosion
(see \cite{Campbel-Stone-Erneux,Erneux-etal,Krupa-Touboul,Kuehn-15} and the references therein).

%%%%%%%%%%%%%%%%%%%%%%%%%%%%%%%%%%%%%%%%%%%%%%%%%%%%%%%%%%%%%%%%
\section{Related work and main results}
\label{sec-2}

In this section,
we start by introducing some related work as preliminaries,
and then  state the main results in this paper.

Under the hypothesis {\bf (H)},
\cite{Driver1976} established the existence of the so-called {\it special matrix solution} \cite{Driver1976}
for linear retarded differential equation (\ref{NARFDE}),
which is similar to the fundamental matrix solution  of a linear nonautonmous ordinary differential equation.
The special matrix solution for equation (\ref{NARFDE}) with the hypothesis {\bf (H)} has the following properties.
\begin{theorem}[Driver, 1976, Theorem 3]
Assume that equation (\ref{NARFDE}) satisfies the hypothesis ${\bf (H)}$.
Then there exists a unique $n\times n$ matrix-valued function $\Phi$,
which is defined on $\mathbb{R}\times \mathbb{R}$
and satisfies that for each $t_{0}\in\mathbb{R}$:
\begin{itemize}
\item[(i)]
 each column of $\Phi(\cdot,t_{0})$ is a solution of equation (\ref{NARFDE}) on $\mathbb{R}$.

\item[(ii)] $\Phi(t_{0},t_{0})=I$, where $I$ is the identity.

\item[(iii)] $|\Phi(t,t_0)|e^{(t-t_0)/r}$ is bounded for each $t\leq t_{0}$.

\item[(iv)]
for each $t\in \mathbb{R}$, the matrix $\Phi(t,t_{0})$ is nonsingular and $\Phi(t,t_0)^{-1}=\Phi(t_0,t)$,
and for each $t_1,t_2\in \mathbb{R}$,
$\Phi(t_2,t_1)\Phi(t_1,t_0)=\Phi(t_2,t_0)$.

\item[(v)] $|\Phi(t,t_0)|\leq e^{\lambda_{r}|t-t_0|}$ for each $t\in \mathbb{R}$,
where  the constant $\lambda_{r}$ is the unique real root of equation $Me^{r\lambda}=\lambda$ in the interval $(-1/r,0)$.
\end{itemize}
\label{thm-prty-sp-solu}
\end{theorem}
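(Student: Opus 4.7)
The plan is to construct $\Phi(\cdot,t_0)$ column by column: for each vector $v\in\mathbb{R}^n$ I would produce the unique solution $x_v:\mathbb{R}\to\mathbb{R}^n$ of (\ref{NARFDE}) with $x_v(t_0)=v$ and controlled backward growth, and then set $\Phi(t,t_0)=[x_{e_1}(t),\ldots,x_{e_n}(t)]$ so that properties (i) and (ii) are immediate. The nontrivial step is the backward extension: the initial datum $x_v(t_0)=v$ is only a point value (not a history segment), so uniqueness of a backward solution is not automatic and must be forced by the smallness of the delay built into (H).

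First I would set up a Banach fixed-point argument on the weighted space
\[
B=\Bigl\{y\in C((-\infty,t_0],\mathbb{R}^n):\ \|y\|_B:=\sup_{t\leq t_0}e^{(t-t_0)/r}|y(t)|<\infty\Bigr\}
\]
with operator $(\mathcal{T}y)(t)=v-\int_t^{t_0}L(s,y_s)\,ds$. Using $|y(s+\theta)|\leq e\|y\|_B\, e^{(t_0-s)/r}$ for $\theta\in[-r,0]$ together with ${\rm Var}_{[-r,0]}\eta(s,\cdot)\leq M$ from (H), I would obtain $|L(s,y_s)|\leq Me\|y\|_B\, e^{(t_0-s)/r}$ and then, after integrating, a Lipschitz constant $Mer<1$ for $\mathcal{T}$ in $B$. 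The fixed point $y^*$ is thus unique; standard local existence for (\ref{NARFDE}) extends $y^*$ forward to $[t_0,\infty)$ from the history segment $y^*|_{[t_0-r,t_0]}$, producing the global solution $x_v$, and the estimate $\|y^*\|_B\lesssim|v|$ yields property (iii).

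For the algebraic relations (iv), uniqueness in $B$ would do all the work: two solutions on $(-\infty,t_0]$ sharing a single value at some point $t_1$ and both lying in $B$ must coincide, since their difference is a fixed point of the homogeneous version of $\mathcal{T}$. The cocycle identity $\Phi(t_2,t_1)\Phi(t_1,t_0)=\Phi(t_2,t_0)$ then follows because both sides, viewed as matrix-valued solutions in $t_2$, agree at $t_2=t_1$ and both satisfy the admissible backward growth bound; specializing $t_2=t_0$ and $t_1=t$ yields $\Phi(t_0,t)=\Phi(t,t_0)^{-1}$ and hence nonsingularity of $\Phi(t,t_0)$ for every $t$.

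The delicate part is (v). The crude contraction in Step 2 only produces the exponent $1/r$ that already appears implicitly in (iii), while the sharp $\lambda_r$ is strictly smaller in absolute value. I would therefore repeat the estimate with the weight $e^{\lambda(t-t_0)}$ for a free parameter $\lambda$; the integration produces a multiplier of order $Me^{r\lambda}/\lambda$, and choosing $\lambda=\lambda_r$ via the indicial condition $Me^{r\lambda}=\lambda$ renders this multiplier exactly borderline. A Gronwall-type argument on the integral equation for $x_v$ then upgrades the borderline estimate to the pointwise bound $|\Phi(t,t_0)|\leq e^{\lambda_r|t-t_0|}$ in the backward direction, and the forward bound follows symmetrically from $x_v(t)=v+\int_{t_0}^t L(s,x_{v,s})\,ds$ using the same characteristic equation. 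I expect exactly this trade between the rough contraction exponent and the sharp $\lambda_r$ to be the main obstacle, since it is precisely the step in which the qualitative smallness in (H) is converted into the quantitative rate determined by the indicial equation.
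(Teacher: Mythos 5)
Note first that the paper does not actually prove this theorem: it is imported verbatim (as Theorem~\ref{thm-prty-sp-solu}) from Driver's 1976 paper and used as a black box, so there is no ``paper's own proof'' to compare against. Taking your proposal on its own terms: the weighted fixed-point formulation is a legitimate and clean route. The contraction estimate with Lipschitz constant $Mer<1$ in $B$ is correct, the column-by-column construction gives (i)--(ii), the bound $\|y^{*}\|_{B}\leq |v|/(1-Mer)$ gives (iii), and the uniqueness-based argument for the cocycle identity and nonsingularity in (iv) works (with the shift-invariance remark you implicitly use: membership in the $1/r$-weighted class is the same whether the weight is anchored at $t_{0}$ or at $t_{1}$). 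One small side remark: the theorem as printed in the paper says $\lambda_{r}$ is the root of $Me^{r\lambda}=\lambda$ in $(-1/r,0)$, which is impossible since the left side is positive; the paper's own Lemma~\ref{lm1.1} shows the correct interval is $(0,1/r)$, and you correctly work from the indicial equation rather than the misprinted interval.

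The genuine gap is in (v), and it is bigger than ``a Gronwall-type argument.'' Once you set $\lambda=\lambda_{r}$, the operator $\mathcal{T}$ on $B_{\lambda_{r}}$ is only nonexpansive, so the fixed point is not recoverable from that weight, and you cannot even assert a priori that $A_{\lambda_{r}}:=\sup_{t\leq t_{0}}e^{\lambda_{r}(t-t_{0})}|x_{v}(t)|$ is finite: the bound from (iii) gives $e^{\lambda_{r}(t-t_{0})}|x_{v}(t)|\lesssim e^{(\lambda_{r}-1/r)(t-t_{0})}\to\infty$ as $t\to-\infty$. Moreover, after substituting $\tau=t_{0}-t$ the relevant integral inequality is
$\varphi(\tau)\leq |v|+M\int_{0}^{\tau}\sup_{[\sigma,\sigma+r]}\varphi\,d\sigma$,
which is an \emph{advanced} (future-looking) inequality, not a retarded one; Gronwall does not apply directly and the natural comparison with the equality solution $|v|e^{\lambda_{r}\tau}$ presupposes exactly the finiteness you lack. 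A correct argument along your lines would run as follows: for each $\lambda\in(\lambda_{r},1/r]$ the map $\mathcal{T}$ is a strict contraction on $B_{\lambda}$ with constant $Me^{r\lambda}/\lambda<1$ (because $\lambda$ lies strictly between the two roots $\lambda_{r}<\mu_{r}$); since $B_{\lambda}\subset B_{1/r}$ with $\|\cdot\|_{1/r}\leq\|\cdot\|_{\lambda}$, uniqueness in $B_{1/r}$ forces the fixed points to agree, so $y^{*}\in B_{\lambda}$ and $A_{\lambda}:=\|y^{*}\|_{\lambda}<\infty$; the integral identity then yields the convex-combination bound $A_{\lambda}\leq\max\{|v|,(Me^{r\lambda}/\lambda)A_{\lambda}\}$, whence $A_{\lambda}\leq|v|$; finally, letting $\lambda\downarrow\lambda_{r}$ and using the pointwise monotone convergence $e^{\lambda(t-t_{0})}\uparrow e^{\lambda_{r}(t-t_{0})}$ gives $A_{\lambda_{r}}\leq|v|$. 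This is a one-parameter family of contractions plus a limiting argument, not a single Gronwall. The forward half of (v) is also not ``symmetric'': for $t\in(t_{0},t_{0}+r)$ the integrand $L(s,x_{v,s})$ reaches back into $[t_{0}-r,t_{0}]$, where $|x_{v}|$ may be as large as $e^{\lambda_{r}r}|v|$, and the crude forward Gronwall bound (Lemma~\ref{bd-growth} in the paper, rate $M$) gives $|x_{v}(t)|\leq e^{\lambda_{r}r+M(t-t_{0})}|v|$, which \emph{exceeds} $e^{\lambda_{r}(t-t_{0})}|v|$ for $t-t_{0}$ small since $\lambda_{r}>M$. One must therefore run a single two-sided weighted estimate on $[t_{0}-r,\infty)$ rather than treat the backward and forward problems independently.
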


Following the results stated in  \cite{Driver1968,Driver1976},
\cite{Arino-Pituk} proved the existence of a pseudo-exponential dichotomy
and obtained the explicit expression of the corresponding projection
by applying the formal adjoint equations associated with linear retarded differential equations
(see \cite[Chapter 6]{JKHale-Verduyn} and \cite{Henry1971}).
As shown in \cite[Section 4]{Arino-Pituk},
the existence of a pseudo-exponential dichotomy for equation (\ref{NARFDE}) with  the hypothesis ${\bf (H)}$
is summarized in the following theorem.

\begin{theorem}[Arino \& Pituk, 2001]
Assume that equation (\ref{NARFDE}) satisfies the hypothesis ${\bf (H)}$.
Then $\{T(t,s):t\geq s\}$ defined by (\ref{df-semigroup}) admits a pseudo-exponential dichotomy.
More precisely,
there exist projections $P(s)$, $s\in\mathbb{R}$,
and  constants $K_{1,r}$, $K_{2,r}$, $\alpha_{r}$ and $\beta_{r}$ with $\alpha_{r}>\beta_{r}$ such that
for each $s,t \in\mathbb{R}$ with $t\geq s$,
the following statements hold:
\begin{itemize}
\item[(i)]
$T(t,s)P(s) = P(t)T(t,s)$.
\item[(ii)]
$T(t,s)|_{\mathcal{R}(P(s))}$ is an isomorphism from $\mathcal{R}(P(s))$ onto $\mathcal{R}(P(t))$,
where $\mathcal{R}(P(s))$ is the range of $P(s)$
and the dimension ${\rm dim}\mathcal{R}(P(s))$ of $\mathcal{R}(P(s))$ satisfies
${\rm dim}\mathcal{R}(P(s))=n$ for each $s\in\mathbb{R}$.
The inverse of $T(t,s)|_{\mathcal{R}(P(s))}$ is denoted by $T(s,t):\mathcal{R}(P(t)) \to \mathcal{R}(P(s))$.
\item[(iii)] for  each $\phi\in  C[-r,0] $,
\begin{eqnarray}
|T(s,t)P(t)\phi| &\leq & K_{1,r} e^{\alpha_{r}(s-t)}|P(t)\phi|,\label{ged-na-est-1}\\
|T(t,s)(I-P(s))\phi| &\leq &
 K_{2,r}e^{\beta_{r}(t-s)}|(I-P(s))\phi|.\label{ged-na-est-2}
\end{eqnarray}
\end{itemize}
\label{thm-ed}
\end{theorem}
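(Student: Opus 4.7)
The plan is to adapt the construction of Arino and Pituk: use the special matrix solution $\Phi$ of Theorem \ref{thm-prty-sp-solu} to build an $n$-dimensional invariant subspace, construct its complement by means of the formal adjoint equation, and then translate the sharp bounds on $\Phi$ into the exponential estimates (\ref{ged-na-est-1})--(\ref{ged-na-est-2}).

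First, I would set
$$\mathcal{R}(P(s)) := \{\phi \in C[-r,0] : \phi(\theta) = \Phi(s+\theta,s)\xi \text{ for some } \xi\in\mathbb{R}^{n}\}.$$
By parts (i) and (ii) of Theorem \ref{thm-prty-sp-solu}, each column of $\Phi(\cdot,s)$ is a global solution of (\ref{NARFDE}) and $\Phi(s,s)=I$, so this is a closed $n$-dimensional subspace in which the parameter is just $\xi=\phi(0)$. Using the cocycle identity $\Phi(t_{2},t_{1})\Phi(t_{1},t_{0})=\Phi(t_{2},t_{0})$ from part (iv), I would check that if $\phi$ has parameter $\xi$, then $T(t,s)\phi$ has parameter $\Phi(t,s)\xi$; this immediately delivers invariance, shows that $T(t,s)$ restricts to an isomorphism from $\mathcal{R}(P(s))$ onto $\mathcal{R}(P(t))$, and gives the backward operator $T(s,t)\phi=\Phi(s+\cdot,s)\Phi(s,t)\phi(0)$ on $\mathcal{R}(P(t))$. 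Property (ii) and the range-of-$P$ part of (i) are then immediate.

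Next I would produce the complementary subspace via the formal adjoint theory of \cite[Ch.~6]{JKHale-Verduyn} and \cite{Henry1971}. Hypothesis {\bf (H)} holds for the transposed kernel as well, so the adjoint equation possesses its own special matrix solution $\Psi$ satisfying the analogue of Theorem \ref{thm-prty-sp-solu}. Using the standard bilinear pairing $\langle\cdot,\cdot\rangle_{s}$ associated with $\eta$, I would define $\mathcal{R}(I-P(s))$ as the annihilator of an $n$-dimensional family of adjoint segments built from $\Psi$. Invariance of the pairing under the coupled forward/adjoint flow forces $T(t,s)\mathcal{R}(I-P(s))\subseteq\mathcal{R}(I-P(t))$, and the direct-sum splitting $C[-r,0]=\mathcal{R}(P(s))\oplus\mathcal{R}(I-P(s))$ then defines $P(s)$ and completes (i).

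The last step is the pair of exponential estimates. On $\mathcal{R}(P(t))$, a function $\phi=\Phi(t+\cdot,t)\xi$ is sent by $T(s,t)$ to $\Phi(s+\cdot,s)\Phi(s,t)\xi$, so part (v) of Theorem \ref{thm-prty-sp-solu} yields (\ref{ged-na-est-1}) with $\alpha_{r}=-\lambda_{r}$ and a constant $K_{1,r}$ absorbing the norm equivalence between $|\phi|$ and $|\phi(0)|$ on this finite-dimensional subspace. The bound (\ref{ged-na-est-2}) is the main obstacle: elements of $\mathcal{R}(I-P(s))$ are characterised only implicitly through the pairing, so their decay has to be extracted by combining the variation-of-constants representation of $T(t,s)\phi$ with the adjoint analogue of Theorem \ref{thm-prty-sp-solu}(v), and by exploiting the smallness condition $rMe<1$ (which enables a Neumann-type control) to track the explicit dependence of $K_{2,r}$ and $\beta_{r}$ on $r$. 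Precisely this tracking is what will later let Theorems \ref{thm-main-results} and \ref{thm-angular-index} pass to the limit $r\to 0$ and obtain the quantitative spectral-gap, angular-distance, and separation-index results.
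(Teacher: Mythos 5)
The paper does not actually prove Theorem \ref{thm-ed}; it is quoted from Arino and Pituk (2001), whose construction via the formal adjoint the paper itself describes as too complicated to reproduce. Your outline is a reasonable reconstruction of that cited argument: generate $\mathcal{R}(P(s))$ from segments of the special matrix solution $\Phi$, carve out the complement by the adjoint bilinear pairing, and translate the bounds on $\Phi$ into the dichotomy estimates. As such it is consistent with what the paper relies upon, and nothing in it appears to be wrong.

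Two points of comparison with the paper's own technical work are worth noting. First, on (\ref{ged-na-est-1}): the constant can in fact be taken to be $K_{1,r}=1$ exactly, rather than a constant absorbing a norm equivalence. The device used in the proof of Theorem \ref{thm-main-results} is to apply the cocycle identity pointwise in $\theta$, writing $\Phi(s+\theta,t)=\Phi(s+\theta,t+\theta)\Phi(t+\theta,t)$, so that the time increment fed into Theorem \ref{thm-prty-sp-solu}(v) is exactly $|s-t|$ and the remaining supremum over $\theta$ is precisely $|P(t)\phi|$; routing the estimate through $|\phi(0)|$, as you propose, picks up a spurious extra factor $e^{\lambda_r r}$.

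Second, and more substantively, the part you flag as the main obstacle --- the decay on $\mathcal{R}(I-P(s))$ --- is handled in the paper not by variation of constants against the adjoint solution but by a direct argument. Lemma \ref{key-est2} introduces $y(t)=\Phi(t_0,t)x(t)$, derives a retarded integral inequality for $e^{\lambda_r(t-t_0)}|\Phi(t,t_0)\dot y(t)|$, invokes the Gronwall-type iteration Lemma \ref{lm-key-est}, and then integrates $x(t)=\int_{+\infty}^{t}\Phi(t,\tau)\Phi(\tau,t_0)\dot y(\tau)\,d\tau$ to obtain (\ref{est-key-1}). This bypasses the adjoint machinery entirely and, crucially for Theorems \ref{thm-main-results} and \ref{thm-angular-index}, produces explicit $r$-dependence for $K_{2,r}$ and $\beta_r$. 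Your adjoint-based route would re-establish the qualitative dichotomy of Theorem \ref{thm-ed}, but recovering the same quantitative constants would require a separate parallel computation, which is precisely what Lemma \ref{key-est2} is designed to replace.
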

We remark that the representations of
the projections $\{P(s):s\in\mathbb{R}\}$ are too complicated,
thus they are  omitted here and the readers are referred to \cite[p.403]{Arino-Pituk}.

As stated in Theorem \ref{thm-ed},
the existence of a pseudo-exponential dichotomy for equation (\ref{NARFDE}) with the hypothesis {\bf (H)}
is obtained.
However, it does not give the explicit expressions of the bounds and the exponents for this dichotomy.
We provide these expressions in the paper and further prove that
the corresponding spectral gap approaches to infinity as the delay $r$ tends to zero.
More precisely, we have the following statements.
\begin{theorem}
Assume that equation (\ref{NARFDE}) satisfies the hypothesis ${\bf (H)}$.
Then  for each $s\in \mathbb{R}$,
the following assertions hold:
\begin{itemize}
\item[(i)]
the constants $\alpha_{r}$, $\beta_{r}$ and $K_{i,r}$ in (\ref{ged-na-est-1}) and (\ref{ged-na-est-2})
can be as follows:
\begin{eqnarray}
\begin{aligned}
&\alpha_{r}=- \lambda_{r} ,\ \ \
\beta_{r}=\frac{\rho\ln(r \lambda_{r})}{r}- \lambda_{r},\\
&K_{1,r}=1, \ \ \
K_{2,r}=-\frac{2e^{2+r \lambda_{r}}(r \lambda_{r})^{1-2\rho}}{\rho\ln(r \lambda_{r})},
\end{aligned}
\label{constants}
\end{eqnarray}
for every $\rho\in (0,1]$,
where the constant $\lambda_{r}$ is defined as in Theorem \ref{thm-prty-sp-solu}.
\item[(ii)]
the norms $|P(s)|$ and $|I-P(s)|$ of the projections $P(s)$ and $I-P(s)$ have
the following uniform bounds with respect with $s$:
\begin{eqnarray}
\max\{|P(s)|, |I-P(s)|\}\leq 2e\gamma_{r}(K_{r})^{2\gamma_{r}-1},
\label{est-prj-01}
\end{eqnarray}
where the constants $\gamma_{r}$ and $K_{r}$ are respectively given by
$$\gamma_{r}=(M-\beta_{r})/(\alpha_{r}-\beta_{r}), \ \ \ K_{r}=\max\{K_{1,r},\,K_{2,r}\}.$$
\end{itemize}

Furthermore,
consider a family of linear retarded differential equations of the form (\ref{NARFDE}),
which satisfy the hypothesis ${\bf (H)}$ and are parameterized by the delay $r$.
Then we have the following:
\begin{itemize}
\item[${\rm(iii)}$] as the delay $r\to 0+$,  the spectral gap $\alpha_{r}-\beta_{r}\to +\infty$
and the constant
\begin{eqnarray}
\mathcal{L}_{r}:=\frac{\alpha_{r}-\beta_{r}}{4(K_{r})^{2}\max\{|P(s)|, |I-P(s)|\}}
     \to +\infty.
\label{gap-cond}
\end{eqnarray}
\end{itemize}
\label{thm-main-results}
\end{theorem}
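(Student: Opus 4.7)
My plan is to open up the constructive proof underlying Theorem \ref{thm-ed} and extract explicit constants from it, then analyse their behaviour as $r \to 0+$. The two essential inputs are Driver's special matrix solution $\Phi(t,t_0)$ from Theorem \ref{thm-prty-sp-solu}, together with the bound $|\Phi(t,t_0)| \leq e^{\lambda_r|t-t_0|}$, and the Arino--Pituk bilinear representation of $P(s)$ via a formal adjoint special solution $\Psi$.

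For part (i), the identification $\mathcal{R}(P(t)) = \{\Phi(t+\cdot,t)\xi : \xi \in \mathbb{R}^n\}$ lets one rewrite $(T(s,t)P(t)\phi)(\theta) = \Phi(s+\theta,t)\phi(0)$ for $\phi \in \mathcal{R}(P(t))$. Inserting the cocycle identity $\Phi(s+\theta,t) = \Phi(s+\theta,s)\Phi(s,t)$ and applying property (v) of Theorem \ref{thm-prty-sp-solu} then produces (\ref{ged-na-est-1}) with $\alpha_r = -\lambda_r$ and $K_{1,r}=1$, once one observes that $|\phi|$ and $|\Phi(s+\theta,s)\,\cdot|$ both involve the same $r$-shift and its contribution cancels. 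For the complementary estimate (\ref{ged-na-est-2}), I would iterate the variation-of-parameters representation of (\ref{NARFDE}) over successive intervals of length $r$: on each such interval the hypothesis ${\rm Var}_{[-r,0]}\eta \leq M$ combined with $Mr \leq 1/e$ produces a contraction factor controlled by the characteristic quantity $r\lambda_r$, and summing $\lceil(t-s)/r\rceil$ contributions geometrically yields the stated $\beta_r = (\rho/r)\ln(r\lambda_r) - \lambda_r$ and the prefactor $K_{2,r}$. The free parameter $\rho \in (0,1]$ enters as a Young-type interpolation exponent between prefactor and decay rate.

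For part (ii), I would substitute $|\Phi| \leq e^{\lambda_r|\cdot|}$ and the analogous bound for $\Psi$ into Arino--Pituk's explicit formula for $P(s)$ (see \cite[p.~403]{Arino-Pituk}), which expresses $P(s)$ as a bilinear pairing of $\phi$ with $\Psi$ weighted by the kernel $\eta$. Using ${\rm Var}\eta \leq M$ and the normalization $(\Psi,\Phi) = I$, this produces a bound of the form $2e\gamma_r K_r^{2\gamma_r - 1}$, in which the exponent $\gamma_r = (M-\beta_r)/(\alpha_r-\beta_r)$ records the relative position of the ambient growth rate $M$ inside the gap $[\beta_r,\alpha_r]$. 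The same estimate for $I - P(s)$ follows by the symmetric pairing argument, exchanging the roles of $\Phi$ and $\Psi$.

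For part (iii), the behaviour is governed by $\lambda_r$: the slow root of $Me^{r\lambda} = \lambda$ satisfies $\lambda_r = M + O(rM^2)$ as $r \to 0+$, so $r\lambda_r \to 0+$ and $\ln(r\lambda_r) \to -\infty$. Consequently $\alpha_r-\beta_r = -(\rho/r)\ln(r\lambda_r) \to +\infty$, giving the spectral gap statement. For $\mathcal{L}_r \to +\infty$ I would choose $\rho = 1/2$, for which $K_{2,r}$ vanishes like $|\ln(r\lambda_r)|^{-1}$, hence $K_r \to 1$; moreover $\gamma_r \to 1$, so that $\max\{|P(s)|, |I-P(s)|\} \leq 2e\gamma_r K_r^{2\gamma_r - 1}$ stays bounded. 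Combining these yields $\mathcal{L}_r \sim (\alpha_r-\beta_r)/(16e) \to +\infty$. The main obstacle I foresee is the bookkeeping in part (ii): verifying that the Arino--Pituk bilinear pairing does not hide an $r^{-1}$ singularity coming either from the adjoint special solution $\Psi$ or from the variation-weighted integral against $\eta$ requires careful matching of the exponential scales $\lambda_r$ on the $\Phi$ side and on the $\Psi$ side with the total variation bound on $\eta$; this is what forces the peculiar $K_r^{2\gamma_r-1}$ dependence and must be controlled so that the divergence of the gap is not cancelled by a hidden blow-up in the prefactor.
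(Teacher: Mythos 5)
Your treatment of part (i) for the $P(s)$ direction and of part (iii) is essentially the paper's approach; your sketch of the $I-P(s)$ estimate captures the right flavor (iterating over windows of length $r$, with $r\lambda_r<1$ as the contraction factor and $\rho$ as an interpolation parameter), though the paper's actual mechanism is more specific: it passes to the auxiliary function $y(t)=\Phi(t_0,t)x(t)$, derives a renewal inequality for $|\Phi(t,t_0)\dot y(t)|$, and then applies an original Gronwall-type iteration lemma (Lemma~\ref{lm-key-est}); this change of variables is also what identifies $l(t_0,\phi)$ as the limit of $y(t)$ and hence ties the estimate to the Arino--Pituk projection. Your sketch does not reproduce that step, and without it the explicit prefactor $K_{2,r}=-2e^{2+r\lambda_r}(r\lambda_r)^{1-2\rho}/(\rho\ln(r\lambda_r))$ is not within reach.

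The genuine gap is in part (ii). You propose to insert the bounds on $\Phi$ and the adjoint special solution into the Arino--Pituk bilinear formula for $P(s)$. The paper does something entirely different and structurally cleaner: granted the dichotomy constants from part (i), it uses only the a priori growth bound $|T(s+\tau,s)|\le e^{M\tau}$ (Lemma~\ref{bd-growth}), compares the lower bound $(K_{1,r})^{-1}e^{\alpha_r\tau}q_+$ on $|T(s+\tau,s)P(s)|$ against the upper bound $K_{2,r}e^{\beta_r\tau}q_-$ on $|T(s+\tau,s)(I-P(s))|$, obtains
\begin{equation*}
(K_{1,r})^{-1}e^{\alpha_r\tau}-K_{2,r}e^{\beta_r\tau}\le 2e^{M\tau}q_-^{-1},\qquad \tau>\tau_0,
\end{equation*}
and then \emph{minimizes} $2e^{M\tau}\bigl((K_{1,r})^{-1}e^{\alpha_r\tau}-K_{2,r}e^{\beta_r\tau}\bigr)^{-1}$ over $\tau$. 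The minimizer $\tau_1$ is where the exponent $\gamma_r=(M-\beta_r)/(\alpha_r-\beta_r)$ comes from, and evaluating there yields exactly $q_-\le 2\gamma_r K_{1,r}\bigl(1+\tfrac{1}{\gamma_r-1}\bigr)^{\gamma_r-1}(K_{1,r}K_{2,r})^{\gamma_r-1}\le 2e\gamma_r K_r^{2\gamma_r-1}$. This makes no use of the adjoint representation at all and is a general dichotomy-plus-growth argument. Your proposed route through the bilinear pairing would have to reproduce this specific $\gamma_r$-weighted interpolation between $K_{1,r}$, $K_{2,r}$ and the ambient rate $M$; as you yourself note in your final paragraph, it is not clear how to prevent $r^{-1}$-singularities in that pairing, and you do not resolve that difficulty. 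As written, part (ii) of your proposal asserts the target bound rather than deriving it. You would be better served replacing the adjoint computation with the paper's optimization argument, for which Lemma~\ref{bd-growth} is the only additional input needed.

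One small point on part (iii): you fix $\rho=1/2$, which does make $K_{2,r}\to 0$ and works; the paper works with $\rho\in(0,1/2)$, which it also reuses in Theorem~\ref{thm-angular-index}, so keeping $\rho$ strictly below $1/2$ is the more economical choice if you intend to carry the constants forward.
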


To show that the limit in (\ref{gap-cond}) is useful in the invariant manifold reduction,
we consider the following nonlinear retarded differential equation
\begin{eqnarray}\label{nonlinear-NDE}
\dot x(t)= Lx_{t}+f(x_t),
\end{eqnarray}
for each $t\geq 0$ and each $x\in \mathbb{R}^{n}$,
where $L$ is the linear operator defined by (\ref{Lt}) whose the kernel $\eta$ is independent of $t$,
and the nonlinear term $f:C[-r,0]\to\mathbb{R}^{n}$ satisfies the following hypothesis:
\begin{enumerate}
\item[{\bf (H')}]
the map $f$ is continuous, $f(0)=0$ and globally Lipschitz on $C[-r,0]$, that is, for all $\phi_{1}$ and $\phi_{2}$ in $C[-r,0]$,
there exists a constant $\mathcal{K}_{f}$ such that
\begin{eqnarray*}
|f(\phi_1)-f(\phi_2)|\leq \mathcal{K}_{f}|\phi_1-\phi_2|.
\end{eqnarray*}
\end{enumerate}
To construct the invariant manifolds
in the neighbourhood of a singular point $x=0$ for nonlinear system (\ref{thm-main-results}) with the hypothesis {\bf (H')},
one usually assumes that the so-called {\it spectral gap condition} holds (see, for instance, \cite{Bates-Jones,JKHale-Verduyn,Hirsch-Pugh-Shub}),
that is,
\begin{eqnarray*}
\mathcal{L}_{r}=\frac{\alpha_{r}-\beta_{r}}{4(K_{r})^{2}\max\{|P(s)|, |I-P(s)|\}}>\mathcal{K}_{f}.
\end{eqnarray*}

Theorem \ref{thm-main-results} shows that
the spectral gap condition for retarded differential equations can be easily satisfied if the delay is sufficiently small.
This fact can be used in the study of complex oscillations arising from differential equations with small delays,
such as canard explosion and relaxation oscillation \cite{Campbel-Stone-Erneux,Erneux-etal,Krupa-Touboul}.

Recall that the angular distance and  the separation index associated with a splitting of the Banach space $C[-r,0]$
are defined by (\ref{df-distance}) and (\ref{df-index}).
Based on Theorems \ref{thm-ed} and \ref{thm-main-results},
we further investigate the angular distance and  the separation index for the obtained dichotomy.
\begin{theorem}
Consider a family of linear retarded differential equations of the form (\ref{NARFDE}),
which satisfy the hypothesis ${\bf (H)}$ and are parameterized by the delay $r$.
For each $s\in \mathbb{R}$,
let the subspaces $\mathcal{C}_{\pm}$ be defined by
\begin{eqnarray*}
\mathcal{C}_{+}=\mathcal{R}(P(s)), \ \ \ \ \  \mathcal{C}_{-}=\mathcal{R}(I-P(s)),
\end{eqnarray*}
where $P(s)$ is the projection operator defined as in Theorem \ref{thm-ed}.
Then there exist a sufficiently small $\widetilde{r}_{0}$ with $0<\widetilde{r}_{0}<1/(Me)$
and a positive constant $\delta$ that is independent of $r$ in $(0,\widetilde{r}_{0})$ such that
\begin{eqnarray}
0<\delta\leq \underline{\rm dist}(\mathcal{C}_{+},\mathcal{C}_{-}) \leq \gamma(\mathcal{C}_+,\mathcal{C}_-),\ \ \
0<\delta\leq \underline{\rm dist}(\mathcal{C}_{-},\mathcal{C}_{+}) \leq \gamma(\mathcal{C}_+,\mathcal{C}_-).
\label{est-main-results}
\end{eqnarray}
\label{thm-angular-index}
\end{theorem}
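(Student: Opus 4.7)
The plan is to reduce the statement to a uniform upper bound on $\max\{|P(s)|,|I-P(s)|\}$ for small $r$, which Theorem \ref{thm-main-results}(ii) provides explicitly. What remains is then a standard linear-algebraic relation between projection norms and the geometric quantities defined in (\ref{df-distance}) and (\ref{df-index}), together with an asymptotic analysis of the constants in (\ref{constants}) as $r\to 0^{+}$.

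For the lower bounds, I would use that for $\xi_{+}\in\mathcal{R}(P(s))$ and $\xi_{-}\in\mathcal{R}(I-P(s))$ the identity $P(s)(\xi_{+}-\xi_{-})=\xi_{+}$ yields $|\xi_{+}|\le |P(s)|\,|\xi_{+}-\xi_{-}|$, and symmetrically $|\xi_{-}|\le |I-P(s)|\,|\xi_{+}-\xi_{-}|$. Specialising one of the vectors to unit norm and taking the infimum over the other immediately gives
\[
\underline{\rm dist}(\mathcal{C}_{+},\mathcal{C}_{-})\ge \frac{1}{|P(s)|},\qquad \underline{\rm dist}(\mathcal{C}_{-},\mathcal{C}_{+})\ge \frac{1}{|I-P(s)|}.
\]
The upper bounds $\underline{\rm dist}(\mathcal{C}_{\pm},\mathcal{C}_{\mp})\le \gamma(\mathcal{C}_{+},\mathcal{C}_{-})$ are automatic, since unit vectors in $\mathcal{C}_{\mp}$ are admissible in the inner infimum defining $\underline{\rm dist}$ and $\gamma(\xi_{+},\xi_{-})=|\xi_{+}-\xi_{-}|$ whenever $|\xi_{\pm}|=1$. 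Hence (\ref{est-main-results}) will follow once $\max\{|P(s)|,|I-P(s)|\}$ is bounded uniformly in $s\in\mathbb{R}$ and $r\in(0,\widetilde r_{0})$.

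This uniform bound is the main obstacle and the place where I would invest all the real effort. The idea is to fix the free parameter $\rho=1/2$ in (\ref{constants}) and to control the implicit constant $\lambda_{r}$. Writing $\mu_{r}:=r\lambda_{r}$, the defining equation $Me^{r\lambda_{r}}=\lambda_{r}$ becomes $\mu_{r}e^{-\mu_{r}}=Mr$, from which $\mu_{r}\to 0^{+}$ and $\lambda_{r}=Me^{\mu_{r}}\to M$ as $r\to 0^{+}$. With $\rho=1/2$ the factor $(r\lambda_{r})^{1-2\rho}$ in $K_{2,r}$ is identically $1$, so the divergence $-\ln\mu_{r}\to+\infty$ forces $K_{2,r}\to 0$ and also $\gamma_{r}\to 1$. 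Consequently, for all sufficiently small $r$ one has $K_{r}=\max\{1,K_{2,r}\}=1$ and $\gamma_{r}\le 2$, and plugging into (\ref{est-prj-01}) gives $\max\{|P(s)|,|I-P(s)|\}\le 4e$ uniformly in $s$. Setting $\delta:=1/(4e)$ then closes the argument. The delicate point is the choice of $\rho$: a value $\rho>1/2$ would let $(r\lambda_{r})^{1-2\rho}$ in $K_{2,r}$ blow up with the spectral gap, so $\rho=1/2$ is chosen precisely to keep every factor in (\ref{est-prj-01}) simultaneously controlled.
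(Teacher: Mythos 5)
Your proposal is correct and follows essentially the same route as the paper: both reduce (\ref{est-main-results}) to a uniform upper bound on $\max\{|P(s)|,|I-P(s)|\}$ via (\ref{est-prj-01}), and both obtain that bound from the $r\to 0^{+}$ asymptotics $K_{2,r}\to 0$, $\gamma_{r}\to 1$, which force $K_{r}=1$ and $\gamma_{r}\leq 2$ for small $r$. The only cosmetic differences are that you re-derive the inequalities $\underline{\rm dist}(\mathcal{C}_{\pm},\mathcal{C}_{\mp})\geq|P_{\pm}|^{-1}$ and $\underline{\rm dist}\leq\gamma$ directly from the definitions rather than invoking the paper's Lemma \ref{lm-dist-index} (Daleckii--Krein and Lian--Wang), and that you fix $\rho=1/2$ while the paper takes $\rho\in(0,1/2)$; both choices lie in the admissible range $(0,1]$ of Theorem \ref{thm-main-results}(i) and both kill $K_{2,r}$, with your choice conveniently making $(r\lambda_{r})^{1-2\rho}\equiv 1$ and yielding the explicit constant $\delta=1/(4e)$.
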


%%%%%%%%%%%%%%%%%%%%%%%%%%%%%%%%%%%%%%%%%%%%%%%%%%%%%%%%%%%%%%%%%%%%%%
\section{Proof of main results}

In this section we give the detailed proof for main results stated in this paper.
We start by the several properties of an analytic function,
which are useful in proving the last statement in Theorem \ref{thm-main-results}.

\begin{lemma}
Let the function $g: \mathbb{R}\to \mathbb{R}$  be in the form
\begin{eqnarray*}
g(x)=Me^{rx}-x, \quad x\in \mathbb{R},
\end{eqnarray*}
where the parameter $r$ satisfies $0<r<r_0$ and the real constants $M, r_0$ are defined as in the hypothesis ${\bf (H)}$.
Then the function $g$ has precisely two real zeros $\lambda_{r}$ and $\mu_{r}$ satisfying
\begin{eqnarray}\label{xi est}
0<\lambda_{r}<\frac{1}{r_0}<\frac{1}{r}<\mu_{r}.
\end{eqnarray}
Furthermore,
the constant $\lambda_{r}$ satisfies the following limits:
\begin{eqnarray*}
r\lambda_{r}\to 0, \ \ \ \lambda_{r}\to M, \ \mbox{ as }\ r\to 0.
\end{eqnarray*}
\label{lm1.1}
\end{lemma}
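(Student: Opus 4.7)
The plan is to exploit strict convexity of $g$. Computing $g''(x) = Mr^2 e^{rx} > 0$ shows $g$ is strictly convex on $\mathbb{R}$, so the unique critical point of $g$ is $x^* = -(1/r)\ln(Mr)$. Hypothesis ${\bf (H)}$ forces $Mr < 1/e$, so $x^* > 1/r$. Evaluating at the minimum, $Me^{rx^*} = 1/r$, which yields
\[ g(x^*) = \frac{1}{r} - x^* = \frac{1 + \ln(Mr)}{r} < 0, \]
because $\ln(Mr) < -1$. Combined with $g(x) \to +\infty$ as $x \to \pm\infty$, strict convexity guarantees exactly two real zeros $\lambda_{r} < \mu_{r}$.

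Next, I would locate these zeros by testing $g$ at three points. Clearly $g(0) = M > 0$; at $1/r_0 = Me$,
\[ g(1/r_0) = M(e^{rMe} - e) < 0 \]
since $rMe < 1$; and at $1/r$,
\[ g(1/r) = Me - 1/r < 0, \]
since $r < r_0$ gives $1/r > Me$. Because $g$ is convex with exactly two zeros and is negative precisely on $(\lambda_{r}, \mu_{r})$, the three sign tests pin down $0 < \lambda_{r} < 1/r_0 < 1/r < \mu_{r}$, which is (\ref{xi est}); the inequality $1/r_0 < 1/r$ itself follows directly from $r < r_0$.

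Finally, to derive the asymptotic behavior, I would invoke the uniform bound $0 < \lambda_{r} < 1/r_0 = Me$ just established, which immediately gives $r\lambda_{r} < rMe \to 0$ as $r \to 0+$. Substituting into the defining relation $Me^{r\lambda_{r}} = \lambda_{r}$ and passing to the limit yields $\lambda_{r} \to M \cdot e^{0} = M$. There is no genuine obstacle in this argument; it is an elementary convexity and sign-testing exercise. The only delicate point worth flagging is that the margin $r < 1/(Me)$ in ${\bf (H)}$ is used in an essential way both to force $g(x^*) < 0$ and to force $g(1/r_0) < 0$, so without the precise threshold $1/(Me)$ the argument would not produce the stated ordering of the two zeros.
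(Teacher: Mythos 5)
Your proof is correct and follows essentially the same convexity-plus-sign-testing strategy as the paper, but with two small organizational differences worth noting. First, to establish that $g$ has exactly two zeros, you evaluate $g$ at the unique critical point $x^* = -\ln(Mr)/r$ and show $g(x^*) = (1+\ln(Mr))/r < 0$; the paper instead applies the intermediate value theorem to the three sign evaluations (at $0$, $1/r$, and $+\infty$) together with Rolle's theorem to locate the critical point between the two zeros. These are equivalent, but your version makes the role of the threshold $1/(Me)$ slightly more transparent, since it is exactly what forces $g(x^*) < 0$. Second, and more substantively, your route to $r\lambda_r \to 0$ is cleaner than the paper's: you simply observe $0 < \lambda_r < Me$ uniformly in $r$, hence $r\lambda_r < rMe \to 0$, whereas the paper rewrites $g(\lambda_r) = 0$ as $rM = r\lambda_r e^{-r\lambda_r}$ and appeals to monotonicity of $h(x) = xe^{-x}$ near the origin. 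Your version avoids the implicit-function-style reasoning and is preferable. The final step $\lambda_r = Me^{r\lambda_r} \to M$ is the same in both.
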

\begin{proof}
It is clear that
$g'(x)=Mre^{rx}-1$ and $g''(x)=Mr^2e^{rx}>0$,
then the derivative $g'$ of the function $g$ has a unique real zero
$$\lambda^{0}=-\frac{\ln(Mr)}{r}>-\frac{\ln(e^{-1})}{r}=\frac{1}{r},$$
where the inequality is from the hypothesis {\bf (H)}.
Moreover,
we also obtain that
the function $g$ has at most two real zeros. Since
\begin{eqnarray*}
g(0)=M>0,\quad g(\frac{1}{r})=Me-\frac{1}{r}=\frac{Mer-1}{r}<0,\quad \lim_{x\to+\infty}g(x)=+\infty,
\end{eqnarray*}
then by continuity
there are real constants $\lambda_{r}$ and $\mu_{r}$  satisfying
\begin{eqnarray*}
\lambda_{r}<1/r<\mu_{r}, \ \ \ g(\lambda_{r})=0, \ \ \ g(\mu_{r})=0.
\end{eqnarray*}
Then by {\it Rolle's Theorem}, $$\lambda_{r}<\frac{1}{r}<\lambda^{0}<\mu_{r}.$$ Note that
$$\lambda_{r}=Me^{rx_1(r)}<Me=\frac{1}{r_0}<\frac{1}{r},$$
then we get the inequalities in (\ref{xi est}).

Since $g(\lambda_{r})=0$,
then $rM=r\lambda_{r}e^{-r\lambda_{r}}$.
It implies that
\begin{eqnarray*}
 r\lambda_{r}\to 0 \ \ \ \mbox{ as }\ r\to0,
\end{eqnarray*}
here we use the properties of the function $h(x)=xe^{-x}$ for $x\in \mathbb{R}$.
Then by  $\lambda_{r}=Me^{r\lambda_{r}}$,
we obtain that $\lambda_{r}\to M$ as $r\to0$.
Therefore, the proof is now complete.
\end{proof}

In the next lemma we prove a key inequality, which is useful in subsequent proof.
\begin{lemma}
\label{lm-key-est}
Let $\varphi$ be a continuous function from $\mathbb{R}^+:=[0,+\infty)$ to $\mathbb{R}^+$
and satisfy
\begin{eqnarray}
c_1:=\max_{0\leq t \leq r}\varphi(t) \  \ \mbox{ and } \ \ \varphi(t)\leq c_2 \int_{t-r}^{t} \varphi(s) d s, \ \ \ \ t\geq r,
\label{cond-varphi}
\end{eqnarray}
for a positive constant $c_2$ with $c_2r<1$.
Then the function $\varphi$ satisfies $\varphi(t)\leq c_1$ for each $t\geq 0$ and
\begin{eqnarray*}
\varphi(t) \leq \frac{c_{1}}{(c_{2}r)^{\rho}}\exp\left(\frac{\rho\ln(c_{2}r)}{r}t\right),
\end{eqnarray*}
for each $t\in \mathbb{R}^+$ and each $\rho\in (0,1]$.
\end{lemma}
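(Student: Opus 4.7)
My plan has two main parts: first, establish the uniform bound $\varphi(t)\le c_1$, and then compare $\varphi$ with an explicit exponentially decaying supersolution $\Psi(t):=\frac{c_1}{(c_2r)^{\rho}}\exp\!\bigl(\tfrac{\rho\ln(c_2r)}{r}t\bigr)=c_1(c_2r)^{\rho(t/r-1)}$.

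For the uniform bound I would argue by contradiction and continuity. Set $t^{*}:=\inf\{t\ge 0:\varphi(t)>c_1\}$; since $\varphi\le c_1$ on $[0,r]$ by definition of $c_1$, we must have $t^{*}\ge r$. Continuity gives $\varphi(t^{*})=c_1$, so the integral hypothesis yields
\[
\varphi(t^{*})\le c_2\int_{t^{*}-r}^{t^{*}}\varphi(s)\,ds\le c_2 r\, c_1<c_1,
\]
a contradiction. Hence $\varphi(t)\le c_1$ for every $t\ge 0$. Note this also settles the estimate on $[0,r]$, since $\Psi(t)\ge\Psi(r)=c_1$ there.

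The central computation will be to verify that $\Psi$ satisfies
\[
c_2\int_{t-r}^{t}\Psi(s)\,ds\le\Psi(t)\qquad(t\ge r).
\]
Integrating the exponential explicitly (substitution $u=s/r-1$) converts this into the algebraic statement
\[
\frac{c_2 r\bigl[1-(c_2r)^{-\rho}\bigr]}{\rho\ln(c_2r)}\le 1,
\]
which, writing $x:=c_2r\in(0,1)$, is equivalent to
\[
x^{1-\rho}-x\le\rho\ln(1/x)\qquad\text{for all }\rho\in(0,1].
\]
This is the step I expect to be the main obstacle, and I would dispatch it by viewing $f(\rho):=x^{1-\rho}-x-\rho\ln(1/x)$ as a function of $\rho$, checking $f(0)=0$ and $f'(\rho)=\ln(1/x)\bigl[x^{1-\rho}-1\bigr]\le 0$ on $[0,1]$ for each fixed $x\in(0,1)$, so $f\le 0$ throughout.

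Finally, to pass from the supersolution property to $\varphi\le\Psi$, I would perturb slightly: fix $\varepsilon>0$ and set $\Psi_{\varepsilon}:=(1+\varepsilon)\Psi$, which inherits the supersolution inequality and satisfies $\varphi<\Psi_{\varepsilon}$ strictly on $[0,r]$. Defining $\tau:=\inf\{t>0:\varphi(t)\ge\Psi_{\varepsilon}(t)\}$ and assuming it is finite, continuity forces $\varphi(\tau)=\Psi_{\varepsilon}(\tau)$ while $\varphi<\Psi_{\varepsilon}$ on $[\tau-r,\tau)$; integrating the strict pointwise inequality over an interval of positive length gives
\[
\varphi(\tau)\le c_2\int_{\tau-r}^{\tau}\varphi(s)\,ds<c_2\int_{\tau-r}^{\tau}\Psi_{\varepsilon}(s)\,ds\le\Psi_{\varepsilon}(\tau),
\]
a contradiction. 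Hence $\varphi<\Psi_{\varepsilon}$ globally, and letting $\varepsilon\to 0^{+}$ yields the claimed bound $\varphi(t)\le\Psi(t)$ for all $t\ge 0$ and all $\rho\in(0,1]$.
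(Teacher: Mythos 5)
Your argument is correct, and it takes a genuinely different route from the paper's. For the exponential estimate, the paper works by direct iteration: it introduces the auxiliary function $\psi(t):=\sup_{s\ge t}\varphi(s)$, derives the geometric recursion $\psi(t)\le c_2 r\,\psi(t-r)$ from the integral hypothesis, iterates to obtain $\varphi(t)\le c_1(c_2 r)^{j}$ whenever $jr\le t<(j+1)r$, and finally passes from the integer exponent $j$ to the continuous exponent $\rho(t/r-1)$ using only monotonicity of $y\mapsto(c_2 r)^{y}$ and the assumption $\rho\le 1$. You instead construct the exponential $\Psi$ explicitly, verify it is a supersolution of the delay integral inequality via the scalar inequality $x^{1-\rho}-x\le\rho\ln(1/x)$ on $x\in(0,1)$, $\rho\in(0,1]$, and then run an $\varepsilon$-perturbed comparison (maximum) principle. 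The paper's iteration is more elementary and avoids the calculus inequality altogether; your supersolution argument is conceptually cleaner, packages the estimate as a Gronwall-type comparison theorem, and makes explicit why the chosen exponential rate is the one that dominates its own $r$-average. Your handling of the uniform bound by a single contradiction at $t^{*}=\inf\{t:\varphi(t)>c_1\}$ is also tidier than the paper's ``prove it on $[r,2r]$ and extend by induction.'' Two small remarks: you should dispose of the trivial case $c_1=0$ (then $\varphi\equiv 0$) before asserting the strict inequality $\varphi<\Psi_{\varepsilon}$ on $[0,r]$; and since for $t\ge r$ the function $\Psi$ is decreasing in $\rho$, it would suffice to verify the scalar inequality only at $\rho=1$, where it reduces to the standard $1-x\le\ln(1/x)$, and deduce the remaining $\rho$ by monotonicity --- though your direct verification for all $\rho\in(0,1]$ is perfectly sound.
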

\begin{proof}
To prove this lemma,
we first claim that $\varphi(t)<c_1$ for $r\leq t\leq 2r$.
Otherwise,
suppose that $\max_{r\leq t \leq 2r}\varphi(t)\geq c_{1}$.
Since
\begin{eqnarray}
\varphi(r)\leq c_2 \int_{0}^{r} \varphi(s) d s\leq c_{1}c_{2}r<c_{1},
\end{eqnarray}
then by continuity,
there exist a constant $t_{1}\in (r,2r]$ such that
\begin{eqnarray*}
t_{1}=\inf\{r\leq t\leq 2r: \varphi(t_{1})=c_{1}\},
\end{eqnarray*}
which implies that $\varphi(t)<c_{1}$ for $r<t<t_{1}$ and $\varphi(t_{1})=c_{1}$.
This is in contradiction with the fact that
\begin{eqnarray*}
\varphi(t_{1})&\leq& c_2 \int_{t_{1}-r}^{t_{1}} \varphi(s) d s\\
              &=& c_2\left(\int_{t_{1}-r}^{r} \varphi(s) d s+\int_{r}^{t_{1}} \varphi(s) d s\right)\\
             &<& c_{2}(c_{1}(2r-t_{1})+c_{1}(t_{1}-r))\\
             &=& c_{2}c_{1}r\\
             &<& c_{1},
\end{eqnarray*}
where we use the following inequalities
\begin{eqnarray*}
\int_{t_{1}-r}^{r} \varphi(s) d s\leq c_{1}(2r-t_{1}), \ \ \ \ \int_{r}^{t_{1}} \varphi(s) d s<c_{1}(r-t_{1}).
\end{eqnarray*}
Hence, the claim holds.
It is not hard to prove that $\varphi(t)\leq c_1$ for all $t\geq 0$ by induction.

Let an auxiliary function $\psi$ from $\mathbb{R}^+$ to itself be defined by
\begin{eqnarray}
\psi(t)=\sup_{s\geq t} \varphi(s) \ \mbox{ for }\ t\geq 0.
\label{df-psi}
\end{eqnarray}
By the above statements,
the function $\psi$ is well-defined.
By  the conditions in (\ref{cond-varphi}),
\begin{eqnarray}
\psi(t)=\sup_{s\geq t}\varphi(s)\leq \sup_{s\geq t}c_2r \psi(s-r)=c_2r \psi(t-r), \ \ \mbox{ for }\ t\geq r.
\label{psi-est}
\end{eqnarray}
Note that for each $t\geq r$ there exists a positive integer $j\geq 1$ such that $jr\leq t< (j+1)r$,
then by (\ref{df-psi}) and (\ref{psi-est}) we have
$$\varphi(t)\leq \psi(jr) \leq c_2r \psi(jr-r)\leq c_1(c_2r)^{j}.$$
Since $0<\rho\leq 1$, $0<c_2r<1$ and $r\leq jr\leq t< (j+1)r$,
then we get
\begin{eqnarray*}
\varphi(t)&\leq&  c_1(c_2r)^{j} \\
      &\leq& c_1 \exp(j\ln(c_2r))\\
          & \leq&   c_1 \exp((t/r-1)\ln(c_2r))\\
          & \leq& c_1 \exp(\rho(t/r-1)\ln(c_2r)).
\end{eqnarray*}
This, together with the facts that
$\exp(\rho(t/r-1)\ln(c_2r))\geq 1$ and $\varphi(t)\leq c_1$ for $0\leq t\leq r$,
yields that the second statement holds.
Therefore, the proof is now complete.
\end{proof}

Similar estimates as these in  Lemma \ref{lm-key-est} have been proved  by \cite{Arino-Pituk,Driver1976}.
However, here we adopt a different method to get a new estimate.
We also remark that to obtain the large spectral gap,
we actually need the constant $\rho$ to be in the interval $(0,1/2)$.

By the hypothesis {\bf (H)} and the inequality (1.4) in \cite[p.168]{JKHale-Verduyn},
we can obtain the estimates for the solutions of equation (\ref{NARFDE}).
More precisely,
we have the next lemma.
\begin{lemma} \label{bd-growth}
Assume that equation (\ref{NARFDE}) satisfies the hypothesis ${\bf (H)}$.
Then the solution $x(\cdot,t_{0},\phi)$ of equation (\ref{NARFDE}) with the initial value $x_{t_0}=\phi\in C[-r,0]$
satisfies the  following estimate
\begin{eqnarray*}
|x_{t}(\cdot,t_{0},\phi)|\leq e^{M(t-t_{0})}|\phi| \ \  \mbox{ for }\ \  t\geq t_{0}.
\end{eqnarray*}
\end{lemma}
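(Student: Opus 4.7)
The plan is to start from the integral form of equation (\ref{NARFDE}), namely
\begin{eqnarray*}
x(t)=\phi(0)+\int_{t_{0}}^{t}L(s,x_{s})\,ds \quad \text{for } t\geq t_{0},
\end{eqnarray*}
combined with the Riesz representation (\ref{Lt}), which together with the hypothesis ${\bf (H)}$ immediately yields the pointwise bound
\begin{eqnarray*}
|L(s,x_{s})|\leq {\rm Var}_{[-r,0]}\eta(s,\cdot)\cdot |x_{s}|\leq M|x_{s}|.
\end{eqnarray*}
This is essentially the content of the inequality (1.4) in \cite[p.168]{JKHale-Verduyn} that is cited just above the lemma.

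Next I would introduce the auxiliary function $u(t):=\sup_{t_{0}-r\leq \sigma\leq t}|x(\sigma)|$ for $t\geq t_{0}$, which is nondecreasing in $t$ and satisfies $u(t_{0})=|\phi|$. For any $\sigma\in[t_{0},t]$, applying the integral equation at $\sigma$ together with the bound above gives
\begin{eqnarray*}
|x(\sigma)|\leq |\phi(0)|+\int_{t_{0}}^{\sigma}M|x_{\tau}|\,d\tau\leq |\phi|+M\int_{t_{0}}^{t}u(\tau)\,d\tau ,
\end{eqnarray*}
where I used $|x_{\tau}|\leq u(\tau)$. Since the right-hand side is independent of $\sigma\in[t_{0}-r,t]$ (noting that for $\sigma\in[t_{0}-r,t_{0}]$ the bound $|x(\sigma)|=|\phi(\sigma-t_{0})|\leq|\phi|$ holds trivially), taking the supremum yields the scalar Gronwall-type inequality
\begin{eqnarray*}
u(t)\leq |\phi|+M\int_{t_{0}}^{t}u(\tau)\,d\tau, \quad t\geq t_{0}.
\end{eqnarray*}

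Applying the classical Gronwall inequality gives $u(t)\leq |\phi|e^{M(t-t_{0})}$, and since $|x_{t}|=\sup_{-r\leq\theta\leq 0}|x(t+\theta)|\leq u(t)$, the stated estimate follows. The only subtlety, and the step that requires a bit of care, is passing from an estimate on the pointwise solution $|x(\sigma)|$ to an estimate on the sup-norm section $|x_{s}|$ inside the integrand; this is precisely why one has to work with the monotone majorant $u(t)$ rather than applying Gronwall to $|x(t)|$ directly. Once this reduction is in place, the argument is a routine scalar Gronwall computation.
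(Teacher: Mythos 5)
Your proof is correct and follows essentially the same route as the paper: the paper simply records the bound $|L(t,\phi)|\leq M|\phi|$ coming from hypothesis {\bf (H)} and then cites the Gronwall-type inequality of \cite[Formula (1.4), p.\,168]{JKHale-Verduyn} with $h\equiv 0$, $m\equiv M$, whereas you have unwound that citation by introducing the monotone majorant $u(t)=\sup_{t_{0}-r\leq\sigma\leq t}|x(\sigma)|$ and running the scalar Gronwall argument explicitly. The substance is the same; you have merely made the cited step self-contained.
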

\begin{proof}
This lemma can be proved by the similar way as in \cite[Theorem 1.1, p.\,168]{JKHale-Verduyn}.
Here we give a short proof.
Since the hypothesis ${\bf (H)}$ holds,
then
\begin{eqnarray*}
|L(t,\phi)|\leq M|\phi|, \ \ \ \ t\geq t_{0}, \ \ \phi\in C[-r,0].
\end{eqnarray*}
Thus by letting $h(s)=0$, $m(s)=M$ and $\sigma=t_{0}$ in  \cite[Formula (1.\,4), p.\,168]{JKHale-Verduyn},
we obtain the estimate in this lemma. This finishes the proof.
\end{proof}
By applying Theorem \ref{thm-prty-sp-solu}, and Lemmas \ref{lm-key-est} and \ref{bd-growth},
we can prove the following results, which are useful in giving the values of the constants $K_{2,r}$ and $\beta_{r}$.
\begin{lemma}\label{key-est2}
For each $\phi\in  C[-r,0] $,
let $x$ be the solution of equation (\ref{NARFDE}) with the initial value $x_{t_0}=\phi$ and
\begin{eqnarray*}
y(t)=\Phi(t_0,t)x(t) \ \ \ \ \ \mbox{ for } \ \ t\geq t_0,
\end{eqnarray*}
where the matrix-valued function $\Phi$ is defined as in Theorem \ref{thm-prty-sp-solu}.
Then there exists a vector $l(t_{0},\phi)\in \mathbb{R}^n$ such that
\begin{eqnarray}
\lim_{t\to+\infty}y(t)=l(t_{0},\phi).
\label{limt-y}
\end{eqnarray}
Furthermore, suppose that the function $\phi$ satisfies $|\phi|\leq 1$ and $l(t_{0},\phi)=0$.
Then the solution $x$ satisfies the following estimate
\begin{eqnarray}
|x(t)| \leq  -\frac{2e^2}{\rho(r \lambda_{r})^{\rho-1}\ln(r \lambda_{r})}\exp\left(\left(\frac{\rho\ln(r \lambda_{r})}{r}- \lambda_{r} \right)(t-t_0)\right),
\label{est-key-1}
\end{eqnarray}
for each $t\geq t_{0}$ and each $\rho\in (0,1]$.
\end{lemma}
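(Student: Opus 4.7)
The strategy is to prove the two claims in succession: first establish existence of the limit by showing $\dot y \in L^1([t_0, \infty))$, then derive the decay estimate from a carefully chosen integral inequality together with Lemma~\ref{lm-key-est}.

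Differentiating $y(t) = \Phi(t_0, t) x(t)$ and using that each column of $\Phi(\cdot, t_0)$ solves~(\ref{NARFDE}), together with the cocycle identity $\Phi(s+\theta, t_0)\Phi(t_0, s) = \Phi(s+\theta, s)$ from Theorem~\ref{thm-prty-sp-solu}(iv), I would obtain the key formula
\[
\dot y(t) = \Phi(t_0, t) \int_{-r}^{0} d[\eta(t,\theta)]\, \bigl(x(t+\theta) - \Phi(t+\theta, t) x(t)\bigr),
\]
in which the integrand is a ``delay--ODE discrepancy'' that can be rewritten as $\int_{t+\theta}^{t} L(\sigma, x_\sigma - \Phi_\sigma(\cdot, t) x(t))\,d\sigma$. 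Estimating this via $|\Phi(\cdot,\cdot)| \leq e^{\lambda_r|\cdot|}$ from Theorem~\ref{thm-prty-sp-solu}(v) and Lemma~\ref{bd-growth} would produce an integral inequality for $|\dot y|$ of the form required by Lemma~\ref{lm-key-est} with $c_2 = \lambda_r$; note $c_2 r = r\lambda_r < 1$ by Lemma~\ref{lm1.1}. Lemma~\ref{lm-key-est} then yields exponential decay of $|\dot y|$, so $\dot y \in L^1([t_0, \infty))$; since $y(t_1) - y(t_2) = \int_{t_2}^{t_1}\dot y(s)\,ds$, the function $y$ is Cauchy at infinity and the limit $l(t_0, \phi) := \lim_{t \to \infty} y(t)$ exists.

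Under $|\phi| \leq 1$ and $l(t_0, \phi) = 0$, I would write $y(t) = -\int_{t}^{\infty}\dot y(s)\,ds$, so that
\[
x(t) = \Phi(t, t_0) y(t) = -\int_{t}^{\infty}\Phi(t, s) \int_{-r}^{0} d[\eta(s, \theta)]\bigl(x(s+\theta) - \Phi(s+\theta, s) x(s)\bigr)\,ds.
\]
Setting $\psi(t) := e^{\lambda_r(t - t_0)}|x(t)|$, Lemma~\ref{bd-growth} gives $\psi(t) \leq e^{(M+\lambda_r)r} \leq e^{2}$ on $[t_0, t_0 + r]$, supplying the initial constant. Using $|\Phi(t, s)| \leq e^{\lambda_r(s-t)}$ and the characteristic relation $\lambda_r = Me^{r\lambda_r}$, I would transform the integral representation above into an inequality permitting the application of Lemma~\ref{lm-key-est} with $c_2 = \lambda_r$, which produces a decay of the form $c_1(r\lambda_r)^{-\rho}\exp(\rho\ln(r\lambda_r)(t - t_0)/r)$. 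Dividing by $e^{\lambda_r(t-t_0)}$ yields the claimed bound on $|x(t)|$; the specific prefactor $-2e^{2}/(\rho(r\lambda_r)^{\rho-1}\ln(r\lambda_r))$ is expected to emerge from combining $c_1 \leq e^{2}$, the factor $(r\lambda_r)^{-\rho}$ from Lemma~\ref{lm-key-est}, and an additional $-r\lambda_r/\ln(r\lambda_r)$ produced by integrating the tail $\int_{t}^{\infty}\dot y(s)\,ds$.

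The main obstacle is deriving the integral inequality that recovers the precise constants. Naive norm bounds on the mixed product $\Phi(t_0, s)\Phi(s+\theta, t_0)$ appearing in the formula for $\dot y$ yield an exponentially growing factor $e^{2\lambda_r(s-t_0)}$, incompatible with Lemma~\ref{lm-key-est}. The workaround is to exploit the cocycle identity $\Phi(s+\theta, t_0)\Phi(t_0, s) = \Phi(s+\theta, s)$, whose norm is bounded by $e^{\lambda_r r}$ independently of $s$, by working with the discrepancy $x(s+\theta) - \Phi(s+\theta, s) x(s)$ as a single unit rather than splitting it. The identification of constants will require careful use of $\lambda_r = Me^{r\lambda_r}$ to absorb all $e^{\lambda_r r}$ factors into pure powers of $r\lambda_r$, and possibly a self-consistent bootstrap argument on $|x(t)|$ to pin down the prefactor exactly.
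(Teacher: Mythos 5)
Your overall framework — differentiate $y$, close an integral inequality via Lemma~\ref{lm-key-est} with $c_2=\lambda_r$, then use $y(t)=-\int_t^\infty\dot y(\tau)\,d\tau$ to bound $x$ — is the paper's strategy, and your computation of $\dot y$ is correct. However, the proposal applies Lemma~\ref{lm-key-est} to the wrong quantities, and this is not a cosmetic issue.

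For the first claim, you say you would derive ``an integral inequality for $|\dot y|$.'' This does not close. From $\dot y(t)=\Phi(t_0,t)L(t,\Phi_t(y_t-y(t)))$ and the cocycle identity one obtains
\[
|\dot y(t)|\leq e^{\lambda_r(t-t_0)}\,M e^{r\lambda_r}\int_{t-r}^t e^{\lambda_r(\tau-t)}\,|\Phi(\tau,t_0)\dot y(\tau)|\,d\tau,
\]
and the right-hand side involves $|\Phi(\tau,t_0)\dot y(\tau)|$, not $|\dot y(\tau)|$. Replacing $|\Phi(\tau,t_0)\dot y(\tau)|$ by $e^{\lambda_r(\tau-t_0)}|\dot y(\tau)|$ leaves a factor $e^{2\lambda_r(\tau-t_0)}$ that grows unboundedly, so no admissible $c_2$ exists and Lemma~\ref{lm-key-est} cannot be invoked. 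The paper's resolution is to multiply through by $\Phi(t,t_0)$ before estimating: the inequality \emph{does} close for $\varphi(t):=e^{\lambda_r(t-t_0)}|\Phi(t,t_0)\dot y(t)|$, because then the $e^{\lambda_r(t-t_0)}$ weight is built into the left-hand side, the outer factor $\Phi(t_0,t)$ never appears, and one gets exactly $\varphi(t)\leq\lambda_r\int_{t-r}^t\varphi(\tau)\,d\tau$ for $t\geq t_0+r$.

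For the decay estimate, the choice $\psi(t)=e^{\lambda_r(t-t_0)}|x(t)|$ is unworkable. Lemma~\ref{lm-key-est} requires a \emph{backward-in-time} inequality $\psi(t)\leq c_2\int_{t-r}^t\psi(s)\,ds$, but under $l(t_0,\phi)=0$ the only representation available for $x$ is the \emph{forward} one,
\[
x(t)=\int_{+\infty}^{t}\Phi(t,\tau)\Phi(\tau,t_0)\dot y(\tau)\,d\tau,
\]
whose right-hand side involves values of $x$ (via $\dot y$) for $\tau\geq t$ only. No manipulation of $e^{\lambda_r|\cdot|}$ bounds or the characteristic relation turns this into the form Lemma~\ref{lm-key-est} needs, and attempting to do so introduces a circularity (bounding $x(t)$ by future values of $x$ requires the decay rate you are trying to prove). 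In the paper, Lemma~\ref{lm-key-est} is applied \emph{only} to $\varphi$ above; the bound on $x(t)$ is then obtained by a separate, explicit integration of the resulting decaying bound for $|\Phi(\tau,t_0)\dot y(\tau)|$ inside the forward integral, using $|\Phi(t,\tau)|\leq e^{\lambda_r(\tau-t)}$ so that the $e^{-\lambda_r(\tau-t_0)}$ in the $\varphi$-bound and the $e^{\lambda_r(\tau-t)}$ from $\Phi(t,\tau)$ combine to leave $e^{-\lambda_r(t-t_0)}$ outside the integral. The prefactor $-2e^2/(\rho(r\lambda_r)^{\rho-1}\ln(r\lambda_r))$ then falls out of (a) the bound $\kappa_1\leq 2\lambda_r e^2$ on the initial sup of $\varphi$, which is \emph{not} the same as your $e^2$ bound on $\psi$ on $[t_0,t_0+r]$, and (b) the explicit antiderivative of $\exp(\rho\ln(r\lambda_r)(\tau-t_0)/r)$. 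Your proposal would need to be reorganized around $\varphi$, not $\psi$, to recover the stated constants.
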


\begin{proof}
By (iv) in Theorem \ref{thm-prty-sp-solu},
 we see that the function $y$ is well-defined.
To prove the existence of the limit,
we first give another property of the special matrix solution
$\Phi(\cdot,t_{0})=(x^{1}(\cdot,t_{0}),...,x^{n}(\cdot,t_{0}))$,
that is,
\begin{eqnarray}
\dot \Phi(t_0,t)=-\Phi(t_0,t)(L(t,x^1_t),L(t,x^2_t),...,L(t,x^n_t))\Phi(t_0,t),
\label{eq-Phi}
\end{eqnarray}
where the derivative is with respect to $t$.
For each $\xi\in \mathbb{R}^n$,
by (iv) in Theorem \ref{thm-prty-sp-solu} we see that $$\Phi(t_0,t)\Phi(t,t_0)\xi=\xi.$$
Then we obtain that
$$\dot \Phi(t_0,t)\Phi(t,t_0)\xi+\Phi(t_0,t)\dot\Phi(t,t_0)\xi=0,$$
which yields that
\begin{eqnarray*}
\dot \Phi(t_0,t)\Phi(t,t_0)\xi&=& -\Phi(t_0,t) L(t,\Phi_t\xi)\\
           &=& -\Phi(t_0,t)(L(t,x^1_t),L(t,x^2_t),...,L(t,x^n_t))\xi.
\end{eqnarray*}
By the arbitrariness of $\xi$, we get
$$
\dot \Phi(t_0,t)\Phi(t,t_0)= -\Phi(t_0,t)(L(t,x^1_t),L(t,x^2_t),...,L(t,x^n_t)).
$$
This yields that (\ref{eq-Phi}) holds.
Then for $t\geq t_0$,
\begin{eqnarray*}
\dot y(t)\!\!\!&=&\!\!\!\dot \Phi(t_0,t)x(t)+\Phi(t_0,t)\dot x(t)\\
\!\!\!&=&\!\!\! -\Phi(t_0,t)(L(t,x^1_t),L(t,x^2_t),...,L(t,x^n_t))\Phi(t_0,t)x(t)+\Phi(t_0,t)L(t,x_t)\\
\!\!\!&=&\!\!\! -\Phi(t_0,t)L(t,\Phi_t y(t))+\Phi(t_0,t)L(t, \Phi_t y_t),
\end{eqnarray*}
which yields
$$
\Phi(t,t_0)\dot y(t)=L(t,\Phi_t(y_t-y(t)))
$$
for $t\geq t_0$.
Then by (v) in Theorem \ref{thm-prty-sp-solu}, we get that for $t\geq t_0+r$,
\begin{eqnarray*}
|\Phi(t,t_0)\dot y(t)|&\leq&M \sup_{-r\leq \theta\leq 0}|\int_{t+\theta}^{t}\Phi(t+\theta,t_0)\dot y(\tau) d \tau|\\
    &=& M \sup_{-r\leq \theta\leq 0}|\int_{t+\theta}^{t}\Phi(t+\theta,\tau)\Phi(\tau,t_0)\dot y(\tau) d \tau|\\
    &\leq& M \sup_{-r\leq \theta\leq 0}|\int_{t+\theta}^{t} e^{ \lambda_{r} (\tau-t-\theta)}|\Phi(\tau,t_0)\dot y(\tau)| d \tau|\\
    &\leq&  M e^{r \lambda_{r}}\int_{t-r}^{t} e^{ \lambda_{r} (\tau-t)}|\Phi(\tau,t_0)\dot y(\tau)| d \tau.
\end{eqnarray*}
Since $M e^{r \lambda_{r}}=\lambda_{r}$ and $0<r\lambda_{r}<1$, then by Lemma \ref{lm-key-est}
we have that for  $t\geq t_{0}$,
\begin{eqnarray}
\begin{aligned}
e^{ \lambda_{r} (t-t_0)}|\Phi(t,t_0)\dot y(t)|
  &\leq   \lambda_{r} \int_{t-r}^{t} e^{ \lambda_{r} (\tau-t_0)}|\Phi(\tau,t_0)\dot y(\tau)| d \tau\\
  &\leq     \frac{\kappa_1}{(r \lambda_{r})^{\rho}}\exp\left(\frac{\rho\ln(r \lambda_{r})}{r}(t-t_0)\right),\\
\end{aligned}
\label{Phi-y-est}
\end{eqnarray}
where the constant $\kappa_1$ is in the form
$$\kappa_1=\sup_{t_0\leq t\leq t_0+r}e^{ \lambda_{r} (t-t_{0})}|\Phi(t,t_0)\dot y(t)|.$$
Hence, by Theorem \ref{thm-prty-sp-solu} and (\ref{Phi-y-est}) we obtain that for $t\geq t_{0}$,
\begin{eqnarray*}
|\dot y(t)|&\leq& |\Phi(t_0,t)||\Phi(t,t_0)\dot y(t)|\\
           &\leq& \frac{\kappa_1}{(r \lambda_{r})^{\rho}}\exp\left(\frac{\rho\ln(r \lambda_{r})}{r}(t-t_0)\right).
\end{eqnarray*}
This together with $r^{-1}\ln(r\lambda_{r})<0$ yields
that the limit of $y(t)$ exists as $t\to+\infty$.
Thus, the first statement is proved.

Suppose that $l(t_{0},\phi)=0$, then for $t\geq t_0$,
\begin{eqnarray}
\begin{aligned}
x(t)&=\Phi(t,t_0)y(t)\\
    & =\Phi(t,t_0)\int_{+\infty}^{t}\dot y(\tau) d\tau \\
    & =\int_{+\infty}^{t} \Phi(t,\tau)\Phi(\tau,t_0)\dot y(\tau) d\tau,
\end{aligned}
\label{eq-x-equal}
\end{eqnarray}
then by (\ref{Phi-y-est}), (\ref{eq-x-equal}) and (v) in Theorem \ref{thm-prty-sp-solu},
we obtain that
\begin{eqnarray}
|x(t)|
\leq -\frac{\kappa_1 r}{\rho(r \lambda_{r})^{\rho}\ln(r \lambda_{r})}\exp\left(\left(\frac{\rho\ln(r \lambda_{r})}{r}- \lambda_{r} \right)(t-t_0)\right),
 \ \ t\geq t_{0}.\label{xt-est}
\end{eqnarray}
For $|\phi|\leq1$ and $t_0\leq t\leq t_0+r$, we observe that
\begin{eqnarray}
\begin{aligned}
e^{ \lambda_{r} (t-t_0)}|\Phi(t,t_0)\dot y(t)|&= e^{ \lambda_{r} (t-t_0)}|L(t,\Phi_t(y_t-y(t)))|\\
         &\leq  Me^{r \lambda_{r}}\left(\sup_{-r\leq\theta\leq0}|\Phi_ty_t|+\sup_{-r\leq\theta\leq0}|\Phi_ty(t)|\right).
\end{aligned}
\label{phi-y-est00}
\end{eqnarray}
By Lemma \ref{bd-growth} we obtain
%we get that the solutions of equation (\ref{NARFDE}) have bounded growth, that is,
%\begin{eqnarray}
%|x_{t}|\leq e^{M(t-t_{0})}|\phi|, \ \ \ t\geq t_{0}.
%\label{ineq-bd-growth}
%\end{eqnarray}
%This together with ({\bf P1}) yields
\begin{eqnarray}
\begin{aligned}
\sup_{0\leq t\leq r}\sup_{-r\leq\theta\leq0}|\Phi_ty_t| & =\sup_{-r\leq\theta\leq r}|x(t)|=\max\{1,\sup_{0\leq t\leq r}|x(t)|\}\\
   & \leq \max\{1,e^{Mr}\}=e^{Mr},
\end{aligned}
\label{phi-y-est01}
\end{eqnarray}
\begin{eqnarray}
\begin{aligned}
\sup_{0\leq t\leq r}\sup_{-r\leq\theta\leq0}|\Phi_ty(t)|
&=\sup_{0\leq t\leq r}\sup_{-r\leq\theta\leq0} |\Phi(t+\theta,t)\Phi(t,t_0)y(t)|\\
& \leq e^{r \lambda_{r}+Mr}.
\end{aligned}
\label{phi-y-est02}
\end{eqnarray}
Since $Me^{r \lambda_{r}}= \lambda_{r}$, $0<r \lambda_{r}<1$ and $Mer<1$,
then by (\ref{phi-y-est00}\,-\,\ref{phi-y-est02}) we obtain
\begin{eqnarray}
\kappa_1\leq  \lambda_{r}  e^{Mr}(1+e^{r \lambda_{r}})\leq  \lambda_{r}  e^{Mr}e^{r \lambda_{r}}(1+e^{-r \lambda_{r}})<2 \lambda_{r}  e^2.
\label{kappa-est}
\end{eqnarray}
Substituting (\ref{kappa-est}) into (\ref{xt-est}) yields the last statement.
Therefore, the proof is  complete.
\end{proof}

We remark that the results in Lemma \ref{key-est2}
lead to a significantly different presentation from  \cite[Theorem 4]{Driver1976},
from which we can only obtain the limit  in (\ref{limt-y}).
However,
to investigate the effects of small delay on  the spectral gap,
the angular distance and the separation index,
it is necessary to obtain the estimate (\ref{est-key-1}).

{\bf Proof of Theorem \ref{thm-main-results}.}
By \cite[Theorem 4.1]{Arino-Pituk},
for each $t\in\mathbb{R}$ the projection $P(t)$ is in the form $P(t)\phi=\Phi_{t}(\cdot,t)l(t,\phi)$ for each $\phi\in C[-r,0] $,
where the vector $l(t,\cdot)$ is defined as in Lemma \ref{key-est2}.
By Theorem  \ref{thm-prty-sp-solu}, we have that
for $t\geq s$ and $\phi\in C[-r,0] $,
\begin{eqnarray}
\begin{aligned}
|T(s,t)P(t)\phi|&=|T(s,t)\Phi_{t}(\cdot,t)l(t,\phi)|\\
               & =|\Phi_{s}(\cdot,t)l(t,\phi)|\\
         & =\sup_{-r\leq \theta\leq 0}|\Phi(s+\theta,t)l(t,\phi)|\\
          &   =\sup_{-r\leq \theta\leq 0}|\Phi(s+\theta,t+\theta)\Phi(t+\theta,t)l(t,\phi)|\\
         & \leq e^{\lambda_{r}|t-s|}|\Phi_{t}(\cdot,t)l(t,\phi)|\\
         &  =e^{\lambda_{r}|t-s|}|P(t)\phi|.
\end{aligned}
\label{est-unstab}
\end{eqnarray}
For each $s\in \mathbb{R}$ and each $\phi\in \mathcal{R}(I-P(s))$ with $|\phi|\leq1$,
let $x$ be the solution of equation (\ref{NARFDE}) with $x_{s}=\phi$.
Then $l(s,\phi)=0$ and by Lemma \ref{key-est2} we have
\begin{eqnarray}
|x_{t}|\leq  -\frac{2e^2}{\rho(r \lambda_{r})^{\rho-1}\ln(r \lambda_{r})}\exp\left(\left(\frac{\rho\ln(r \lambda_{r})}{r}- \lambda_{r} \right)(t-s-r)\right), \ \ \ t\geq s.
\label{est-stab}
\end{eqnarray}
Hence, (\ref{est-unstab}) and (\ref{est-stab}) yield  (\ref{constants}),
that is,
\begin{eqnarray*}
\begin{aligned}
&\alpha_{r}=- \lambda_{r} ,\ \ \
\beta_{r}=\frac{\rho\ln(r \lambda_{r})}{r}- \lambda_{r},\\
&K_{1,r}=1, \ \ \
K_{2,r}=-\frac{2e^{2+r \lambda_{r}}(r \lambda_{r})^{1-2\rho}}{\rho\ln(r \lambda_{r})}.
\end{aligned}
\end{eqnarray*}
Thus, {\rm (i)} is obtained.

By Lemma \ref{lm1.1},
we have that $r \lambda_{r}\to 0$ and $\lambda_{r}\to M$ as $r\to 0+$,
then  as $r\to 0+$,
\begin{eqnarray*}
\begin{aligned}
\alpha_{r}-\beta_{r} & =- \lambda_{r}-\left(\frac{\rho\ln(r \lambda_{r})}{r}- \lambda_{r}\right)\\
         &=-\frac{\rho\ln(r \lambda_{r})}{r}\\
         &=-\frac{\rho\lambda_{r}\ln(r \lambda_{r})}{r\lambda_{r}}
        \to +\infty.
\end{aligned}
\end{eqnarray*}
To prove (ii),
for each $s\in \mathbb{R}$ we denote $q_{+}:=|P(s)|$ and $q_{-}:=|I-P(s)|$.
Then from (\ref{ged-na-est-1}) and (\ref{ged-na-est-2}) it follows that
for each $\tau>0$,
\begin{eqnarray}
|T(s+\tau,s)P(s)|\geq (K_{1,r})^{-1} e^{\alpha_{r}\tau} q_{+},\\
|T(s+\tau,s)(I-P(s))|\leq K_{2,r}e^{\beta_{r}\tau}q_{-}.
\label{ineq-prj-1}
\end{eqnarray}
Let the function $\Psi$ be defined by
\begin{eqnarray*}
\Psi(\tau)=(K_{1,r})^{-1} e^{\alpha_{r}\tau}-K_{2,r}e^{\beta_{r}\tau}  \ \ \ \mbox{ for }\ \tau>0.
\end{eqnarray*}
Since $\alpha_{r}>\beta_{r}$,
then we can compute that
the function $\Psi$ satisfies that
$\Psi(\tau_{0})=0$ and $\Psi(\tau)>0$ for $\tau>\tau_{0}$,
where the constant  $\tau_{0}$ is in the form
\begin{eqnarray*}
\tau_{0}:=\frac{\ln(K_{1,r}K_{2,r})}{\alpha_{r}-\beta_{r}}>0.
\end{eqnarray*}
Applying Lemma \ref{bd-growth} and  (\ref{ineq-prj-1}) yields that for each $\tau>\tau_{0}>0$,
\begin{eqnarray}
\begin{aligned}
\Psi(\tau)&\leq||T(s+\tau,s)P(s)|q_{+}^{-1}-|T(s+\tau,s)(I-P(s))|q_{-}^{-1}|\\
          &\leq|T(s+\tau,s)P(s)q_{+}^{-1}+T(s+\tau,s)(I-P(s))q_{-}^{-1}|\\
          &\leq |T(s+\tau,s)||q_{+}^{-1}P(s)+q_{-}^{-1}(I-P(s))|\\
          &\leq e^{M\tau}|q_{+}^{-1}P(s)+q_{-}^{-1}(I-P(s))|\\
          &\leq e^{M\tau}|q_{-}^{-1}I+(q_{+}^{-1}-q_{-}^{-1})P(s)|\\
          &\leq e^{M\tau}(q_{-}^{-1}+|q_{+}^{-1}-q_{-}^{-1}|q_{+})\\
          &\leq e^{M\tau}q_{-}^{-1}(1+|q_{+}-q_{-}|) \\
          & \leq 2e^{M\tau}q_{-}^{-1},
\end{aligned}
\label{ineq-prj-2}
\end{eqnarray}
where we use the fact that
\begin{eqnarray*}
|q_{+}-q_{-}|=||P(s)|-|I-P(s)||\leq |P(s)+(I-P(s))|=1.
\end{eqnarray*}
Recall that  the constant $\gamma_{r}$ is in the form
$$\gamma_{r}=\frac{M-\beta_{r}}{\alpha_{r}-\beta_{r}}.$$
Since $M>\alpha_{r}$ and $\alpha_{r}>\beta_{r}$,
then we can check that  $\gamma_{r}>1$.
Let the function $\widetilde{\Psi}$ be defined by
$$\widetilde{\Psi}(\tau):=2e^{M\tau} (\Psi(\tau))^{-1}, \ \ \ \ \  \tau>\tau_{0}.$$
Then we can compute that the function $\widetilde{\Psi}$
reaches the minimum value in $(\tau_{0},+\infty)$ at
\begin{eqnarray*}
\tau_{1}:=\frac{\ln(K_{1,r}K_{2,r}(M-\beta_{r}))-\ln(M-\alpha_{r})}{\alpha_{r}-\beta_{r}}.
\end{eqnarray*}
Substituting $\tau=\tau_{1}$ into $\widetilde{\Psi}(\tau)$ yields that
\begin{eqnarray*}
\min_{\tau>\tau_{0}}\widetilde{\Psi}(\tau)&=&\widetilde{\Psi}(\tau_{1})\\
&=&
\frac{2K_{1,r}}{\left(\frac{K_{1,r}K_{2,r}(M-\beta_{r})}{M-\alpha_{r}}\right)^{1-\gamma_{r}}
-K_{1,r}K_{2,r}\left(\frac{K_{1,r}K_{2,r}(M-\beta_{r})}{M-\alpha_{r}}\right)^{-\gamma_{r}}}\\
&=& 2\gamma_{r} K_{1,r}\left(\frac{K_{1,r}K_{2,r}(M-\beta_{r})}{M-\alpha_{r}}\right)^{\gamma_{r}-1}
\\
&=& 2\gamma_{r} K_{1,r}\left(1+\frac{1}{\gamma_{r}-1}\right)^{\gamma_{r}-1}(K_{1,r}K_{2,r})^{\gamma_{r}-1},
\end{eqnarray*}
where we use the fact that
\begin{eqnarray*}
1+\frac{1}{\gamma_{r}-1}=\frac{M-\beta_{r}}{M-\alpha_{r}}.
\end{eqnarray*}
Thus, by (\ref{ineq-prj-2}) we have
\begin{eqnarray}
q_{-}\leq 2\gamma_{r} K_{1,r}\left(1+\frac{1}{\gamma_{r}-1}\right)^{\gamma_{r}-1}(K_{1,r}K_{2,r})^{\gamma_{r}-1}.
\label{est-prj}
\end{eqnarray}
Since $K_{r}=\max\{K_{1,r},\,K_{2,r}\}$, $\gamma_{r}>1$ and
\begin{eqnarray*}
\left(1+\frac{1}{\gamma_{r}-1}\right)^{\gamma_{r}-1}<e,
\end{eqnarray*}
then by (\ref{est-prj}) we have that
$$q_{-}\leq 2e\gamma_{r}(K_{r})^{2\gamma_{r}-1}.$$
Similarly, the above inequalities also hold for $q_{+}$.
Thus, {\rm (ii)} is proved.

By Lemma \ref{lm1.1}, for each $\rho\in (0,1/2)$ we have that
as $r\to 0+$, the following limits hold:
\begin{eqnarray}
\begin{aligned}
\beta_{r}&=\frac{\rho\ln(r \lambda_{r})}{r}- \lambda_{r}\to -\infty, \\
  \gamma_{r}&=\frac{M-\beta_{r}}{\alpha_{r}-\beta_{r}}\to 1, \\
\frac{\alpha_{r}-\beta_{r}}{(K_{1,r})^{2\gamma_{r}+1}}
      & =\alpha_{r}-\beta_{r}\to +\infty,\\
\frac{\alpha_{r}-\beta_{r}}{(K_{2,r})^{2\gamma_{r}+1}}
     &=\frac{ \lambda_{r} \rho^{2(\gamma_{r}+1)}}{(2\exp(2+r \lambda_{r}))^{2\gamma_{r}+1}}
     \frac{\left(\ln(r \lambda_{r})\right)^{2(\gamma_{r}+1)}}{(r \lambda_{r})^{(1-2\rho)(2\gamma_{r}+1)+1}}
    \to +\infty.
\end{aligned}
\label{lmt-1-2}
\end{eqnarray}
By (\ref{est-prj}) and the definition of the constant $\mathcal{L}_{r}$ given in (\ref{gap-cond}),
we obtain
\begin{eqnarray*}
\mathcal{L}_{r}\geq \frac{\alpha_{r}-\beta_{r}}{8 e \gamma_{r}(K_{r})^{2\gamma_{r}+1}},
\end{eqnarray*}
then by (\ref{lmt-1-2}) the limit in  (\ref{gap-cond}) holds.
Thus, {\rm (iii)} is proved.
This finishes the proof.
\hfill $\Box$

To prove the last theorem,
we introduce several results on the angular distance and  the separation index,
which show the relationships between these geometric properties of a splitting and the corresponding projections.
These results are summarized in the following lemma.

\begin{lemma}
Let the Banach space $C[-r,0]$ have a splitting $C[-r,0]=\mathcal{C}_{1}\oplus \mathcal{C}_{1}$,
where $\mathcal{C}_{j}$ are nonzero closed subspaces of $C[-r,0]$,
and $P_{1}$ and $P_{2}=I-P_{1}$ are the corresponding projections with $\mathcal{R}(P_{1})=\mathcal{C}_{1}$
and $\mathcal{R}(P_{2})=\mathcal{C}_{2}$. Then the following statements hold:
\begin{itemize}
\item[(i)]
\label{state-1}
the angular distance $\gamma(\mathcal{C}_{1},\mathcal{C}_{2})$ defined as in (\ref{df-distance}) satisfies the following estimates:
\begin{eqnarray}
\label{est-imp-1}
(\gamma(\mathcal{C}_{1},\mathcal{C}_{2}))^{-1}\leq |P_{j}| \leq 2(\gamma(\mathcal{C}_{1},\mathcal{C}_{2}))^{-1}.
\end{eqnarray}

\item[(ii)]
\label{state-2}
the separation indices $\underline{\rm dist}(\mathcal{C}_{1},\mathcal{C}_{2})$ and
$\underline{\rm dist}(\mathcal{C}_{2},\mathcal{C}_{1})$ defined as in (\ref{df-index})  satisfy that
\begin{eqnarray}
\label{est-imp-2}
\underline{\rm dist}(\mathcal{C}_{1},\mathcal{C}_{2})=(|P_{1}|)^{-1}, \ \ \ \
\underline{\rm dist}(\mathcal{C}_{2},\mathcal{C}_{1})=(|P_{2}|)^{-1}.
\end{eqnarray}
\end{itemize}
\label{lm-dist-index}
\end{lemma}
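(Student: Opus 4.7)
My plan is to prove the two statements by unwinding the definitions of the angular distance and the separation index and exploiting the complementary identities $P_j|_{\mathcal{C}_j}=\mathrm{id}$ and $P_j|_{\mathcal{C}_k}=0$ for $j\neq k$, which follow from $P_1+P_2=I$ together with $P_1P_2=0$.

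For the lower bound in (i), I would pick arbitrary nonzero $\xi_j\in\mathcal{C}_j$, set $\eta:=\xi_1/|\xi_1|-\xi_2/|\xi_2|$, and observe that $\eta\neq 0$ (by $\mathcal{C}_1\cap\mathcal{C}_2=\{0\}$) while $P_1\eta=\xi_1/|\xi_1|$ has norm $1$; this yields $|P_1|\geq 1/|\eta|$, and the infimum over admissible pairs gives $|P_1|\geq 1/\gamma(\mathcal{C}_1,\mathcal{C}_2)$, with the bound for $|P_2|$ being identical. For the upper bound in (i), for $\xi$ with $P_1\xi\neq 0\neq P_2\xi$ I would apply the definition of $\gamma$ to the pair $(P_1\xi,-P_2\xi)$ to obtain $\gamma(\mathcal{C}_1,\mathcal{C}_2)\leq |P_1\xi/|P_1\xi|+P_2\xi/|P_2\xi||$ (note the $+$ sign), clear denominators by multiplying through by $|P_1\xi||P_2\xi|$, and then control the asymmetry between the two norms via the reverse triangle inequality $||P_1\xi|-|P_2\xi||\leq |P_1\xi+P_2\xi|=|\xi|$. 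This should yield $|P_1\xi|\leq 2|\xi|/\gamma(\mathcal{C}_1,\mathcal{C}_2)$, with the degenerate cases $P_1\xi=0$ or $P_2\xi=0$ handled trivially using $\gamma(\mathcal{C}_1,\mathcal{C}_2)\leq 2$.

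For (ii), the inequality $\underline{\mathrm{dist}}(\mathcal{C}_1,\mathcal{C}_2)\geq 1/|P_1|$ is immediate: for $\xi_1\in\mathcal{C}_1\cap S$ and $\xi_2\in\mathcal{C}_2$, applying $P_1$ to $\xi_1-\xi_2$ returns $\xi_1$, so $1=|\xi_1|\leq |P_1||\xi_1-\xi_2|$. For the reverse, I would invoke that $|P_1|$ is realized as a supremum to choose, for each $\varepsilon>0$, a vector $\xi$ with $|P_1\xi|\geq(|P_1|-\varepsilon)|\xi|$, set $\widetilde{\xi}_1:=P_1\xi/|P_1\xi|\in\mathcal{C}_1\cap S$ and $\widetilde{\xi}_2:=-P_2\xi/|P_1\xi|\in\mathcal{C}_2$, and observe $\widetilde{\xi}_1-\widetilde{\xi}_2=\xi/|P_1\xi|$, so $|\widetilde{\xi}_1-\widetilde{\xi}_2|\leq 1/(|P_1|-\varepsilon)$. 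Sending $\varepsilon\to 0^+$ yields the matching upper bound; the identity for $\underline{\mathrm{dist}}(\mathcal{C}_2,\mathcal{C}_1)$ follows by interchanging the roles of $P_1$ and $P_2$.

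The main technical obstacle is the constant $2$ in the upper bound of (i): because $|P_1\xi|$ and $|P_2\xi|$ can differ substantially, one cannot directly compare $\gamma(\mathcal{C}_1,\mathcal{C}_2)\cdot|P_1\xi|$ with $|\xi|=|P_1\xi+P_2\xi|$. The critical device is the homogenization step followed by absorbing this discrepancy with the reverse triangle inequality, which is precisely where the factor $2$ enters; the remaining computations in both (i) and (ii) are routine manipulations of norms and projections.
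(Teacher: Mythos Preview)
Your argument is correct. Both the lower bound in (i) and the identity in (ii) are exactly the natural one-line computations, and for the upper bound in (i) your homogenization trick---multiplying $\gamma(\mathcal{C}_1,\mathcal{C}_2)\le \bigl|\,P_1\xi/|P_1\xi|+P_2\xi/|P_2\xi|\,\bigr|$ through by $|P_1\xi|\,|P_2\xi|$ and then absorbing the mismatch via $\bigl|\,|P_1\xi|-|P_2\xi|\,\bigr|=\bigl|\,|P_1\xi|-|{-}P_2\xi|\,\bigr|\le |P_1\xi+P_2\xi|=|\xi|$---is the standard device and gives the factor $2$ in the right place.

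As for comparison with the paper: the paper does not prove this lemma at all. It simply records that (i) is \cite[Lemma~1.1, p.~156]{Daleckii-Krein-74} and (ii) is \cite[Lemma~3]{Lian-Wang-15}, and omits the details. Your proposal therefore supplies strictly more than what the paper contains; the arguments you outline are in fact the ones found in those references, so you are reconstructing the cited proofs rather than giving a different route.
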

The statement (\ref{est-imp-1}) is obtained by \cite[Lemma 1.1, p.156]{Daleckii-Krein-74},
and the statement (\ref{est-imp-2}) is given in \cite[Lemma 3]{Lian-Wang-15}.
Thus the detailed proof is omitted.

By applying these statements in Lemma \ref{lm-dist-index} and Theorem \ref{thm-angular-index},
we can given the proof for the final theorem.

{\bf Proof of Theorem \ref{thm-angular-index}.}
For each $s\in \mathbb{R}$,
let the subspaces $\mathcal{C}_{\pm}$ be in the form
\begin{eqnarray*}
\mathcal{C}_{+}=\mathcal{R}(P(s)), \ \ \ \ \  \mathcal{C}_{-}=\mathcal{R}(I-P(s)),
\end{eqnarray*}
where $P(s)$ is the projection operator defined as in Theorem \ref{thm-ed}.
Take $\rho$ with $\rho\in (0,1/2)$ in (\ref{constants}).
Then the fact that $r\lambda_{r}\to 0$ as $r\to 0+$, which is obtained in Lemma \ref{lm1.1},
yields that
\begin{eqnarray*}
K_{2,r}=-\frac{2e^{2+r \lambda_{r}}(r \lambda_{r})^{1-2\rho}}{\rho\ln(r \lambda_{r})}\to 0, \ \ \ \mbox{ as }\ r\to 0+.
\end{eqnarray*}
This implies that $K_{r}=\max\{K_{1,r},\,K_{2,r}\}=1$ for sufficiently small $r$.
As stated in (\ref{lmt-1-2}),
we have the limit:
\begin{eqnarray*}
\gamma_{r}=\frac{M-\beta_{r}}{\alpha_{r}-\beta_{r}}\to 1, \ \ \ \mbox{ as }\ r\to 0+.
\end{eqnarray*}
Then the right side of (\ref{est-prj-01}) satisfies that
\begin{eqnarray*}
2e\gamma_{r}(K_{r})^{2\gamma_{r}-1}\to 2e,  \ \ \ \mbox{ as }\ r\to 0+.
\end{eqnarray*}
Thus by continuity
there exists a sufficiently small $\widetilde{r}_{0}$ with $0<\widetilde{r}_{0}<1/(Me)$
and a positive constant $\delta$ that is independent of $r$ in $(0,\widetilde{r}_{0}]$ such that
\begin{eqnarray}
2e\gamma_{r}(K_{r})^{2\gamma_{r}-1}\leq \delta^{-1}.
\label{est-1}
\end{eqnarray}
Applying (\ref{est-prj-01}), (\ref{est-1}) and (\ref{est-imp-2}) in Lemma \ref{lm-dist-index} ,
we obtain that
\begin{eqnarray}
\underline{\rm dist}(\mathcal{C}_{+},\mathcal{C}_{-})\geq \delta>0
\label{est-2}
\end{eqnarray}
for each $r\in(0,\widetilde{r}_{0}]$.
By (\ref{est-imp-1}) and (\ref{est-imp-2}) in Lemma \ref{lm-dist-index},
we obtain that $\gamma(\mathcal{C}_{+},\mathcal{C}_{-})$ and $\underline{\rm dist}(\mathcal{C}_{+},\mathcal{C}_{-})$ satisfy
\begin{eqnarray*}
\gamma(\mathcal{C}_{+},\mathcal{C}_{-}) \geq \underline{\rm dist}(\mathcal{C}_{+},\mathcal{C}_{-}).
\end{eqnarray*}
This together with (\ref{est-2}) yields
$$0<\delta\leq \underline{\rm dist}(\mathcal{C}_{+},\mathcal{C}_{-}) \leq \gamma(\mathcal{C}_+,\mathcal{C}_-).$$
Similarly, we can obtain
$$0<\delta\leq \underline{\rm dist}(\mathcal{C}_{-},\mathcal{C}_{+}) \leq \gamma(\mathcal{C}_+,\mathcal{C}_-).$$
Therefore, the proof is now complete.
\hfill $\Box$

\begin{remark}
As the delay tends to zero,
we prove that the spectral gap corresponding to this dichotomy can be large enough,
and the angular distance  and the separation index are uniformly bounded from below.
We hope that these results stated in this paper will be used to study Hopf bifurcation,
canard explosion and relaxation oscillations
arising from differential equations with small delays,
and we refer the readers to the recent works \cite{Campbel-Stone-Erneux,Erneux-etal,Krupa-Touboul,Kuehn-15}.

We also point out that compared to linear autonomous retarded differential equations,
the semigroups generalized by the solution operators of neutral differential equations have not only point spectrum
but also continuous spectrum \cite{Hale-Lunel2001,Henry1974}.
This causes a big obstacle in establishing the existence of pseudo-exponential dichotomies.
Recently,
large spectral gaps induced by small delays for neutral differential equations were considered in \cite{Chen-Shen-20},
where the proof is finished by the {\it Banach Fixed Point Theorem}.
Here we apply a more straightforward way to obtain the same result for retarded differential equations and
further study the effects of small delays on the angular distance and the separation index.
\end{remark}

%%%%%%%%%%%%%%%%%%%%%%%%%%%%%%%%%%%%%%%%%%%%%%%%%%%%%%%%%%%%%%%%%%%%%%%%%%%%%%%%%%%%%%%%%%%%%%
\section*{References}

%% The Appendices part is started with the command \appendix;
%% appendix sections are then done as normal sections
%% \appendix

%% \section{}
%% \label{}

%% If you have bibdatabase file and want bibtex to generate the
%% bibitems, please use
%%
%%  \bibliographystyle{elsarticle-num}
%%  \bibliography{<your bibdatabase>}

\begin{thebibliography}{00}

%% \bibitem{label}
%% Text of bibliographic item

%\bibitem{}

\bibitem{Arino-Pituk} O. Arino,  M. Pituk,
 More on linear differential systems with small delays,
 {\it J. Differential Equations} {\bf 170}(2001), 381--407.


\bibitem{Barreira-D-Valls}
   L. Barreira,   D. Dragi\v{c}evi\'c,  C. Valls,
    {\it Admissibility and Hyperbolicity},
    Springer International Publishing, Basel, 2018.


\bibitem{Bates-Jones} P. W. Bates, C. K. R. T. Jones,
    Invariant manifolds for semilinear partial differential equations,
    {\it Dynamics Reported} {\bf 2} (1989), 1--38, Wiley.


\bibitem{Campbel-Stone-Erneux} S. Campbell, E. Stone, T. Erneux,
Delay induced canards in a model of high speed machining,
 {\it Dyn. Syst.} {\bf 24} (2009), 373--392.

\bibitem{Chen-Shen-20} S. Chen, J. Shen,
  Large spectral gap induced by small delay and its application to reduction,
   {\it Discrete Contin. Dyn. Syst.} {\bf 40} (2020), 4533-4564.

\bibitem{Chicone03} C. Chicone, Inertial and slow maifolds for delay equations with small delays,
 {\it J. Differential Equations} {\bf 190} (2003), 364--406.


\bibitem{Chow-Huang94} S.-N. Chow,  W.  Huang,
    Singular perturbation problems for a system of differential-difference equations. I,
    {\it J. Differential Equations}{\bf 112} (1994), 257--307.

\bibitem{Chow-Lu}  S.-N. Chow, K. Lu,
  Invariant manifolds for flows in Banach spaces,
 {\it J. Differential Equations} {\bf74} (1988), 285--317.

\bibitem{Chow-Mallet83} S.-N. Chow,  J.   Mallet-Paret,
    Singularly perturbed delay differential equations,
     {\it Coupled Nonlinear Oscillators}, eds. J. Chandra and A. Scott, North Holland, 1983.

\bibitem{Coppel} W. A. Coppel,
     {\it Dichotomies in Stability Theory},
     Lecture Notes in Math., Vol. {\bf 629}, Springer, New York, 1978.


\bibitem{Daleckii-Krein-74}
 J.  Daleckii,  M.  Krein,
  {\it Stability of Solutions of Differential Equations in Banach Space},
  Amer. Math. Soc. trans., Providence, Rhode Island, 1974.

\bibitem{Driver1968} R. D. Driver,
    On Ryabov's asymptotic characterizaiton of the solutions of quasilinear differential equations with small delays,
 {\it SIAM Rev.} {\bf 10}(1968), 329--341.


\bibitem{Driver1976} R. D.  Driver,
    Linear differential systems with small delays,
{\it J. Differential Equations} {\bf 21}(1976), 149--167.

\bibitem{DuLiLi-18} Z. Du, J. Li,  X.  Li,
     The existence of solitary wave solutions of delayed Camassa-Holm equation via a geometric approach,
     {\it J. Funct. Anal.} {\bf 275}(2018), 988--1007.

\bibitem{Erneux-etal}
     T. Erneux, L. Weicker, L. Bauer,  P. H\"ovel,
    Short-time-delay limit of the self-coupled FitzHugh-Nagumo system,
     {\it Phy. Rev. E} {\bf 93}(2016), 022208, 10 pp.

\bibitem{Faria-Huang}
  T. Faria,   W. Huang,
   Special solutions for linear functional differential equations and asymptotic behaviour,
   {\it Differential Integral Equations} {\bf 18}(2005), 337--360.

\bibitem{Foiaseatal}
  C. Foias,   G. Sell, R. Teman,
   Inertial manifolds for nonlinear evolutionary equations,
   {\it J. Differential Equations} {\bf 73}(1988), 309--353.

\bibitem{Gyori-Pitukl} I. Gy\"ori, M. Pituk,
  Special solutions for neutral functional differential equations,
  {\it J. Inequal. Appl.}{\bf 6} (2001), 99--117.


\bibitem{JKHale-Verduyn} J. K. Hale, S. M. Verduyn Lunel,
{\it Introduction to Functional Differential Equations}, Springer, New York, 1993.

\bibitem{Hale-Lunel2001}
  J. K. Hale,  S. M.  Verduyn Lunel,
   Effects of small delays on stability and control,
  in {\it Operator Theory and Analysis  (Amsterdam, 1997)}, Birkh\"auser, Basel, (2001), 275--301.




\bibitem{Henry1971}
   D. Henry,
  The adjoint of a linear functional differential equation and boundary value problems,
  {\it J. Differential Equations} {\bf 9}(1971), 55--66.

\bibitem{Henry1974}
   D. Henry,
   Linear autonomous neutral functional differential equations,
   {\it J. Differential Equations} {\bf 15}(1974), 106--128.

\bibitem{Henry1981}
    D. Henry,
   {\it Geometric Theory of Semilinear Parabolic Equations},
   Lecture Notes in Math., Vol. {\bf 840}, Springer, Berlin, 1981.




\bibitem{Hirsch-Pugh-Shub}
    M. Hirsch,  C. Pugh,  M.  Shub,
   {\it Invariant Manifolds},
   Lecture Notes in Math., Vol. {\bf 583}, Springer, New York, 1977.


\bibitem{Krupa-Touboul}
   M. Krupa,   J. Touboul,
    Canard explosion in delay differential equations,
    {\it J. Dynam. Differential Equations} {\bf 28}(2016), 471--491.

\bibitem{Kuehn-15}
      C. Kuehn,
    {\it Multiple Time Scale Dynamics}, Springer, Swizerland, 2015.

\bibitem{Li-Guo-18} D. Li,  S. Guo,
   Stability and Hopf bifurcation in a reaction-diffusion model with chemotaxis and nonlocal delay effect,
{\it  Internat. J. Bifur. Chaos Appl. Sci. Engrg.} {\bf 28} (2018), 1850046, 25 pp.

\bibitem{Lian-Wang-15} Z. Lian,  Y.  Wang,
      $K$-dimensional invariant cones of random dynamical systems in $\mathbb{R}^{n}$ with applications,
      {\it J. Differential Equations}  {\bf 259} (2015), 2807--2832.



\bibitem{Massera-66}
J. Massera,  J. Sch\"affer,
{\it Linear differential equations and function spaces},
  Pure and Applied Math., Academic Press, New York, 1966.


\bibitem{Ryabov1965}
    Yu. A. Ryabov,
    Certain asymptotic properties of linear systems with small time lag,
    {\it Trudy Sem. Teoret. Differencial. Urav. Otklon. Argumentom Univ. Druzby Narodov Patrica Lummumby} {\bf 3} (1965), 153--164.

\bibitem{Shen-Lu-Zhang-20} J. Shen, K. Lu, W. Zhang,
 Smoothness of invariant manifolds and foliations for infinite dimensional random dynamical systems,
  {\it Sci. China Math.} {\bf 63} (2020), 1877--1912.

\bibitem{Walther-14}  H. Walther,
Topics in delay differential equations,
{\it Jahresber. Dtsch. Math.-Ver.} {\bf 116}  (2014),  87--114.


\bibitem{Zhang} W. Zhang,
  Generalized exponential dichotomies and invariant manifolds for differential equations,
  {\it Adv. Math.(in Chinese)}  {\bf 22}(1993), 1--45.

\bibitem{Zhang-etal-17} W. Zhang, K. Lu, W. Zhang,
Differentiability of the conjugacy in the Hartman-Grobman theorem,
 {\it Trans. Amer. Math. Soc.} {\bf 369} (2017),  4995--5030.


\end{thebibliography}

%% else use the following coding to input the bibitems directly in the
%% TeX file.

\end{document}